\tikzstyle{block} = [rectangle, draw,
\tikzstyle{line} = [draw, -latex', thick]
\tikzset{node distance = 2cm}
\newtheorem{theorem}{Theorem}[section]
\newtheorem{corollary}[theorem]{Corollary}
\newtheorem{lemma}[theorem]{Lemma}
\theoremstyle{definition}
\newtheorem{definition}[theorem]{Definition}
\newtheorem{remark}[theorem]{Remark}
\numberwithin{equation}{section}
\begin{document}


\baselineskip=17pt


\title{Representations of Real Numbers by Alternating Perron Series and Their Geometry}

\author{Mykola Moroz\\
Institute of Mathematics\\ 
National Academy of Sciences of Ukraine\\
Tereschenkivska 3\\
01024 Kyiv, Ukraine\\
E-mail: moroznik22@gmail.com}

\date{}

\maketitle


\renewcommand{\thefootnote}{}

\footnote{2020 \emph{Mathematics Subject Classification}: Primary 11K55; Secondary 11A67, 28A12.}

\footnote{\emph{Key words and phrases}: Perron series, alternating Perron series, $P^-$-representation, alternating L\"{u}roth series, Pierce series, second Ostrogradsky series, Lebesgue measure}

\renewcommand{\thefootnote}{\arabic{footnote}}
\setcounter{footnote}{0}


\begin{abstract}
We consider the representation of real numbers by alternating Perron series ($P^-$-representation), which is a generalization of representations of real numbers by Ostrogradsky--Sierpi\'nski--Pierce series (Pierce series),  alternating Sylvester series (second Ostrogradsky series), alternating L\"{u}roth series, etc. Namely, we prove the basic topological and metric properties of $P^-$-representation and find the relationship between $P$-representation and $P^-$-represen\-tation in some measure theory problems.
\end{abstract}

\begin{flushright}
	\textit{Dedicated to my advisor \\ Professor Mykola Pratsiovytyi \\ on his 65th birthday}
\end{flushright}

\section{Introduction}

Let $P$ be any fixed sequence of functions $(\varphi_n)_{n=0}^\infty$ such that $$\varphi_n(x_1,\ldots,x_n)\colon\mathbb{N}^n\rightarrow\mathbb{N}$$ for all $n\in\mathbb{N}$, $\varphi_0=\text{const}\in\mathbb{N}$. It is known {\cite[Theorem 1]{Moroz2024}} that each $x\in(0,1]$ can be uniquely represented as a Perron series defined by the sequence $P$, i.e.,
\begin{gather}\label{PerronSer1}
	x=\sum_{n=0}^{\infty}\frac{r_0 r_1 \cdots r_n}{(p_1-1)p_1(p_2-1)p_2\cdots(p_n-1)p_n p_{n+1}}~,
\end{gather}
where $r_0=\varphi_0$, $r_n=\varphi_n(p_1,\ldots,p_n)$, and $p_n\geq r_{n-1}+1$ for all $n\in\mathbb{N}$. Then series \eqref{PerronSer1} is called the \emph{$P$-representation ($P$-expansion) of $x$}, and is briefly denoted by $\Delta^P_{p_1 p_2 \ldots}$. The number $p_k=p_k(x)$ is called the \emph{$k$th $P$-digit of $x$}. 

For different sequences $P$, corresponding $P$-representations of numbers have both common and distinct topological and metric properties. Some of these properties are described in \cite{Moroz2023,Moroz2024}. In the article \cite{Moroz2023}, the properties of the sequence $g_n=p_n-r_{n-1}$, which hold for almost all $x\in(0,1]$ within specific classes of $P$-representations, are studied.

It is important that the $P$-representation is a generalization of L\"{u}roth expansion \cite{Salat1968,ZhPr2012}, Engel expansion \cite{BGP2023,BP2023,ERS1958,Renyi1962-1}, Sylvester expansion \cite{ERS1958,Galambos1976,Perron,Zhu2014},  Dar\'oczy--K\'atai-Birthday expansion \cite{Galambos1998,Lee2001} (DKB-expansion), etc. Of course, Oppenheim expansion \cite{Galambos1970,Oppenheim1972} is a broader generalization of the representations mentioned above and include the $P$-representation for which $\varphi_0=1$. However, for several reasons described in \cite{Galambos1976,Oppenheim1972}, its special case, known as the Restricted Oppenheim Expansion (ROE), is often considered instead \cite{Galambos1998,Galambos1971,SydorukTorbin2017,WangWu2006}.
Note that ROE is a special case of $P$-representation, namely $P$-representation is ROE if $\varphi_0=1$ and $\varphi_n(x_1,\ldots,x_n)=\varphi_n(x_n)$ for all $n\in\mathbb{N}$. Thus, $P$-representation is an intermediate generalization between restricted Oppenheim expansion and Oppenheim expansion.

\begin{center}
	\begin{tikzpicture}[scale=3]
		\node[rectangle,draw] (label1) at (0,0) {Oppenheim expansion};
		\node[rectangle,draw] (label2) at (0,-0.34) {$P$-representation};
		\node[rectangle,draw] (label3) at (0,-0.68) {restricted Oppenheim expansion (ROE)};
		\node[rectangle, text width=1.8cm,text centered, draw] (label4) at (-1.6,-1.25) {L\"{u}roth \\ expansion};
		\node[rectangle, text width=1.8cm,text centered, draw] (label5) at (-0.8,-1.25) {Engel \\ expansion};
		\node[rectangle, text width=1.8cm,text centered, draw] (label6) at (0,-1.33) {modified \\ Engel \\ expansion};
		\node[rectangle, text width=1.8cm,text centered, draw] (label7) at (0.8,-1.25) {Sylvester \\ expansion};
		\node[rectangle, text width=1.8cm,text centered, draw] (label8) at (1.6,-1.25) {DKB- \\ expansion};
		
		\draw[line] (label1) -- (label2); 
		\draw[line] (label2) -- (label3); 
		\draw[line] (label3) |-+(0,-0.6em)-| (label4);
		\draw[line] (label3) |-+(0,-0.6em)-| (label5);
		\draw[line] (label3) -- (label6);
		\draw[line] (label3) |-+(0,-0.6em)-| (label7);
		\draw[line] (label3) |-+(0,-0.6em)-| (label8);
	\end{tikzpicture}
\end{center}

The situation with alternating L\"{u}roth expansion, Ost\-rogradsky--Sierpi\'n\-ski--Pierce expansion (Pierce expansion), alternating Sylvester expansion (se\-cond Ostrogradsky expansion) is similar. For a long time, individual theori\-es of these expansions developed independently (for example, see \cite{A2024,ABPT2007,BPT2013,KalpazidouKnopfmacher1991,PB2005,PK2013,Shallit1986,TorbinPratsyovyta2010}). The alternating Oppenheim expansion is a known generalization of these representations (see \cite{GKL2003,Knopfmacher1989}). But, for similar reasons as for the Oppenheim expansion, the alternating analogue of the restricted Oppenheim expansion is predominantly studied \cite{GKL2003}.

\begin{center}
	\begin{tikzpicture}[scale=3]
		\node[rectangle,draw] (label1) at (0,0) {alternating Oppenheim expansion};
		\node[rectangle,draw] (label2) at (0,-0.34) {alternating $P$-representation};
		\node[rectangle,draw] (label3) at (0,-0.68) {restricted alternating Oppenheim expansion (RAOE)};
		\node[rectangle, text width=2.0cm,text centered, draw] (label4) at (-1.65,-1.33) {alternating \\ L\"{u}roth \\ expansion};
		\node[rectangle, text width=2.0cm,text centered, draw] (label5) at (-0.8,-1.33) {alternating \\ Engel \\ expansion};
		\node[rectangle, text width=1.8cm,text centered, draw] (label6) at (0,-1.25) {Pierce \\ expansion};
		\node[rectangle, text width=2.0cm,text centered, draw] (label7) at (0.8,-1.33) {alternating \\ Sylvester \\ expansion};
		\node[rectangle, text width=2.0cm,text centered, draw] (label8) at (1.65,-1.33) {alternating \\ DKB-expansion};
		
		\draw[line] (label1) -- (label2); 
		\draw[line] (label2) -- (label3); 
		\draw[line] (label3) |-+(0,-0.6em)-| (label4);
		\draw[line] (label3) |-+(0,-0.6em)-| (label5);
		\draw[line] (label3) -- (label6);
		\draw[line] (label3) |-+(0,-0.6em)-| (label7);
		\draw[line] (label3) |-+(0,-0.6em)-| (label8);
	\end{tikzpicture}
\end{center}

In this paper, we consider the $P^-$-representation of real numbers, which is the alternating analogue of the $P$-representation. It generalizes the aforementioned alternating expansions and the restricted alternating Oppenheim expansion. We establish and prove basic topological and metric properties of $P^-$-representation. To achieve this, we adopt a geometric approach similar to that in \cite{Moroz2024}. This approach differs from the traditional method, which relies on algorithms for series expansion (see \cite{Galambos1976}, etc.). We also show that the corresponding $P^-$-representation and $P$-representation have equivalent metric theories, which is the cause of the identity  of some facts for these representations.

In fact, we show that the Lebesgue measure problems related to $P^-$-representation do not require individual solutions, but are instead simple corollaries of the corresponding problems defined in terms of $P$-representa\-tion. In particular, it explains why certain propositions are identical for well-known pairs of representations of real numbers: positive and alternating L\"{u}roth expansions, modified Engel and Pierce expansions, Sylvester and second Ostrogradsky expansions. Relevant examples are given in the last section of this paper.

\section{Definitions and examples}

\begin{definition}
The \emph{alternating Perron series} is defined as a numerical series of the form
	\begin{gather}\label{alternatingPerronSer1}
		\sum_{n=0}^{\infty}\frac{(-1)^n r_0 r_1 \cdots r_n}{(q_1-1)q_1(q_2-1)q_2\cdots (q_n-1)q_n (q_{n+1}-1)},
	\end{gather}
where $(r_n)_{n=0}^{\infty}$ is an arbitrary sequence of natural numbers, $q_n\in\mathbb{N}$, $q_{n}\geq r_{n-1}+1$ for all $n\in\mathbb{N}$.
\end{definition}	

\begin{remark}
It is obvious that $$\frac{r_0 r_1 \cdots r_n}{(q_1-1)q_1(q_2-1)q_2\cdots (q_n-1)q_n (q_{n+1}-1)}\overset{\mathrm{def}}{=}\frac{r_0 }{q_{1}-1}$$ if $n=0$.
\end{remark}

\begin{lemma}\label{lemmaalternatingPerron1}
The alternating Perron series \eqref{alternatingPerronSer1} is absolutely convergent, and its sum is a number from $(0,1)$.
\end{lemma}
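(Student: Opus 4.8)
The plan is to write $a_n$ for the modulus of the $n$th term of \eqref{alternatingPerronSer1}, namely
$$a_n=\frac{r_0 r_1 \cdots r_n}{(q_1-1)q_1(q_2-1)q_2\cdots (q_n-1)q_n (q_{n+1}-1)}>0,$$
so that the series becomes the alternating series $\sum_{n=0}^\infty(-1)^n a_n$. Everything will follow from a single algebraic computation: cancelling the common factors, one finds for $n\geq 1$ that $\dfrac{a_n}{a_{n-1}}=\dfrac{r_n}{q_n(q_{n+1}-1)}$. This ratio identity is the one step I would carry out in full.

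I would then estimate the ratio using the two standing hypotheses. From $q_{n+1}\geq r_n+1$ we get $q_{n+1}-1\geq r_n$, whence $\frac{r_n}{q_{n+1}-1}\leq 1$; and from $r_{n-1}\geq 1$ we get $q_n\geq r_{n-1}+1\geq 2$. Hence $\frac{a_n}{a_{n-1}}\leq\frac{1}{q_n}\leq\frac12$ for all $n\geq 1$. Combined with $a_0=\frac{r_0}{q_1-1}\leq 1$ (using $q_1-1\geq r_0$), this produces the geometric majorant $a_n\leq 2^{-n}$. Absolute convergence is then immediate by comparison with $\sum 2^{-n}$, and in particular $a_n\to 0$.

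To locate the sum I would appeal to the Leibniz (alternating series) criterion. The bound $\frac{a_n}{a_{n-1}}\leq\frac12<1$ shows that $(a_n)$ is strictly decreasing, and we have just seen $a_n\to 0$, so the series converges to a sum $S$. Grouping the terms as $S=a_0-(a_1-a_2)-(a_3-a_4)-\cdots$ with each bracket positive gives $S<a_0\leq 1$, while grouping as $S=(a_0-a_1)+(a_2-a_3)+\cdots$ with $a_0-a_1>0$ gives $S>0$. Therefore $S\in(0,1)$.

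The argument is essentially routine once the ratio identity is secured; the only delicate point is the strictness of $0<S<1$. I would obtain it from the two groupings above rather than from the non-strict bracketing $a_0-a_1\leq S\leq a_0$ that Leibniz supplies directly, since the equality $a_0=1$ can occur (precisely when $q_1=r_0+1$) and must be excluded to pin down the upper endpoint.
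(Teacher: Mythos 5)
Your proof is correct and follows essentially the same route as the paper: both establish the geometric bound $a_n\leq 2^{-n}$ (the paper by substituting $(q_k-1)q_k\geq r_{k-1}(r_{k-1}+1)$ and $q_{n+1}-1\geq r_n$ directly into the denominators, you via the equivalent ratio estimate $\frac{a_n}{a_{n-1}}=\frac{r_n}{q_n(q_{n+1}-1)}\leq\frac12$), and both then use the strict decrease of the terms together with the alternating structure to conclude $0<S<a_0=\frac{r_0}{q_1-1}\leq 1$. Your explicit bracketing of the Leibniz bounds, and the remark on why strictness of $S<a_0$ is needed when $a_0=1$, merely spell out what the paper leaves implicit.
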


\begin{proof}
	Series \eqref{alternatingPerronSer1} is absolutely convergent because
	\begin{align*}
		0&<\sum_{n=0}^{\infty}\frac{r_0 \cdots r_n}{(q_1-1)q_1\cdots (q_n-1)q_n (q_{n+1}-1)}\\
		&\leq \sum_{n=0}^{\infty}\frac{r_0 \cdots r_n}{r_0(r_0+1) \cdots r_{n-1}(r_{n-1}+1)r_n} \\
		&=1+\sum_{n=1}^{\infty}\frac{1}{(r_0+1)\cdots(r_{n-1}+1)}\leq 1+\sum_{n=1}^{\infty}\frac{1}{2^{n}}=2.
	\end{align*}
	Since $\frac{r_0 \cdots r_{n-1}}{(q_1-1)q_1\cdots (q_{n-1}-1)q_{n-1}(q_{n}-1)}>\frac{r_0 \cdots r_n}{(q_1-1)q_1\cdots(q_n-1)q_n(q_{n+1}-1)}$ for all $n\in\mathbb{N}$, we see that
	$$0<\sum_{n=0}^{\infty}\frac{(-1)^n r_0 r_1 \cdot\ldots\cdot r_n}{(q_1-1)q_1(q_2-1)q_2\cdots(q_n-1)q_n (q_{n+1}-1)}<\frac{r_0}{q_1-1}\leq 1.$$
\end{proof}	

Series \eqref{alternatingPerronSer1} is: 
\begin{enumerate}[label=\upshape(\roman*), leftmargin=*, widest=iii]
	\item the alternating L\"{u}roth series when $r_n=1$ for all $n\in\mathbb{N}\cup\{0\}$:
	\begin{multline*}
\frac{1}{q_1-1}-\frac{1}{(q_1-1)q_1(q_2-1)}+\cdots\\	+\frac{(-1)^{n+1}}{(q_1-1)q_1\cdots(q_{n-1}-1)q_{n-1}(q_{n}-1)}+\cdots,
	\end{multline*}
	
	where $q_{n}\geq 2$;
	\item the alternating Engel series when $r_0=1$ and $r_n=q_n-1$ for all $n\in\mathbb{N}$:
	$$\frac{1}{q_1-1}-\frac{1}{q_1(q_2-1)}+\cdots+\frac{(-1)^{n+1}}{q_1\cdots q_{n-1}(q_n-1)}+\cdots,$$
	where $q_1\geq 2$ and $q_{n+1}\geq q_n$;
	\item the Pierce series (the Ostrogradsky--Sierpi\'nski--Pierce series) when $r_0=1$ and $r_n=q_n$ for all $n\in\mathbb{N}$:
	$$\frac{1}{q_1-1}-\frac{1}{(q_1-1)(q_2-1)}+\cdots+\frac{(-1)^{n+1}}{(q_1-1)\cdots(q_n-1)}+\cdots,$$
	where $q_1\geq 2$ and $q_{n+1}\geq q_n+1$;
	\item the alternating Sylvester series (the second Ostrogradsky series) when $r_0=1$ and $r_n=(q_n-1)q_n$ for all $n\in\mathbb{N}$:
	$$\frac{1}{q_1-1}-\frac{1}{q_2-1}+\cdots+\frac{(-1)^{n+1}}{q_{n}-1}+\cdots,$$
	where $q_{1}\geq 2$ and $q_{n+1}\geq (q_n-1)q_n+1$
.
\end{enumerate}

We see that the proposed alternating Perron series generate versions of well-known alternating series that slightly differ from the generally accepted forms. By utilizing this notation of alternating Perron series, we can establish significant relationships between alternating Perron expansions and their positive analogues.

\section{The $P^-$-representation of real numbers. Cylindrical sets}

Let us consider functions $\varphi_n(x_1,\ldots,x_n)\colon\mathbb{N}^ n\rightarrow\mathbb{N}$ for all $n\in\mathbb{N}$ and $\varphi_0=\text{const}\in\mathbb{N}$. We denote a fixed sequence of functions $(\varphi_n)_{n=0}^\infty$ by $P$.

\begin{definition}
Let a number $x\in(0,1)$ be the sum of series \eqref{alternatingPerronSer1}, $r_0=\varphi_0$, and $r_n=\varphi_n(q_1,\ldots,q_n)$ for all $n\in\mathbb{N}$. Then we call series expansion \eqref{alternatingPerronSer1} of $x$ the \emph{$P^-$-representation of $x$}.
\end{definition}

The number $x$ and its $P^-$-representation \eqref{alternatingPerronSer1} are briefly denoted by $\Delta^{P^-}_{q_1 q_2\ldots}$. The number $q_n=q_n(x)$ is called the \emph{$n$th $P^-$-digit of $x$}.

\begin{definition}
A non-empty set $$\Delta^{P^-}_{c_1 \ldots c_k}=\left\{x\in(0,1)\colon  x=\Delta^{P^-}_{c_1 \ldots c_k q_{k+1}q_{k+2}\ldots}\right\}$$ is called a \emph{$P^-$-cylinder of rank $k$ with base $c_1 \ldots c_k$}.
\end{definition}

That is, $\Delta^{P^-}_{c_1 \ldots c_k}$ is the set of all numbers $x\in(0,1)$ that have a $P^-$-repre\-sentation with first $k$ $P^-$-digits $c_1,\ldots,c_k$ respectively. If such a set is empty for some base $c_1 \ldots c_k$, then we do not call this set $P^-$-cylinder.

\begin{remark}
Sets similar to $P^-$-cylinders of rank $k$ are sometimes called \emph{$k$th order cylinders} \cite{Fang2015,Galambos1976} or \emph{$k$th level fundamental intervals} \cite{A2024}. We follow the tradition established in the works of Professor Pratsiovytyi and his colleagues (for example, see \cite{ABPT2007,BP2023,BPT2007,Moroz2024,Moroz2023,PK2013,SydorukTorbin2017,TorbinPratsyovyta2010,ZhPr2012}).
\end{remark}

From the definition of $P^-$-cylinder, it follows that
$\Delta^{P^-}_{c_1 \ldots c_k c_{k+1}}\subset \Delta^{P^-}_{c_1 \ldots c_k}$.

\begin{lemma}\label{sup1alternatingPerron}
	For the $P^-$-cylinder $\Delta^{P^-}_{c_1 \ldots c_k}$ of odd rank $k$, we have
	\begin{gather}
		\sup \Delta^{P^-}_{c_1\ldots c_k}=\sum_{n=0}^{k-1}\frac{(-1)^n r_0 \cdots r_{n}}{(c_1-1)c_1\cdots (c_n-1)c_n (c_{n+1}-1)}, \label{sup1alternatingPerron2}\\
		\inf \Delta^{P^-}_{c_1 \ldots c_k}=\sup \Delta^{P^-}_{c_1 \cdots c_k}-\frac{r_0\cdots r_{k-1}}{(c_1-1)c_1\cdots (c_{k}-1)c_k},\label{inf1alternatingPerron2}
	\end{gather}
	where $r_0=\varphi_0$ and $r_n=\varphi_n(c_1,\ldots,c_n)$ for all $n=1,\ldots,k-1$.
\end{lemma}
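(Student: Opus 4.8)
The plan is to write every $x\in\Delta^{P^-}_{c_1\ldots c_k}$ as a fixed ``head'' plus a ``tail'' that depends only on the free digits $q_{k+1},q_{k+2},\ldots$, and then to extremize the tail. Since the first $k$ digits are pinned at $c_1,\ldots,c_k$, the quantities $r_0,\ldots,r_k$ are all determined (each $r_j=\varphi_j(c_1,\ldots,c_j)$ depends only on the fixed digits), so the partial sum $\sum_{n=0}^{k-1}$ of \eqref{alternatingPerronSer1} is a constant equal to the right-hand side of \eqref{sup1alternatingPerron2}; call it $B$. First I would isolate the tail $\sum_{n=k}^{\infty}$ and factor out the common quantity $A:=\frac{r_0\cdots r_{k-1}}{(c_1-1)c_1\cdots(c_k-1)c_k}$ together with the sign $(-1)^k$.

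After re-indexing by $m=n-k$, the remaining factor is exactly an alternating Perron series in the shifted data $(r_{k+m})_m,(q_{k+m})_m$, whose leading term is $\frac{r_k}{q_{k+1}-1}$; denote its sum by $S$. Thus every point of the cylinder has the form $x=B+(-1)^kA\,S$. Because $k$ is odd, $(-1)^k=-1$, so $x=B-A\,S$ with $A>0$. Applying Lemma~\ref{lemmaalternatingPerron1} to the shifted series gives $0<S<\frac{r_k}{q_{k+1}-1}$, and the admissibility constraint $q_{k+1}\ge r_k+1$ (i.e.\ $q_{k+1}-1\ge r_k$) yields $S<1$; hence $S\in(0,1)$ for every admissible choice of the free digits.

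It then remains to determine the exact range of $S$. For the supremum of $x$ (smallest $S$), taking $q_{k+1}\to\infty$ forces $S<\frac{r_k}{q_{k+1}-1}\to 0$, so $\inf S=0$ and $\sup\Delta^{P^-}_{c_1\ldots c_k}=B$, which is \eqref{sup1alternatingPerron2}. For the infimum of $x$ (largest $S$), I would take the smallest admissible value $q_{k+1}=r_k+1$, so that the leading term of $S$ equals $1$, and then let $q_{k+2}\to\infty$ so that the alternating remainder of $S$ (dominated in absolute value by its first term $\frac{r_k r_{k+1}}{(q_{k+1}-1)q_{k+1}(q_{k+2}-1)}$) tends to $0$; hence $S\to 1^-$, so $\sup S=1$ and $\inf\Delta^{P^-}_{c_1\ldots c_k}=B-A$, which is precisely \eqref{inf1alternatingPerron2}.

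I expect the main obstacle to be pinning down the sharp value $\sup S=1$ (rather than some smaller bound) and checking that both extremal values are approached but never attained, so that they are genuinely the supremum and infimum of the cylinder. The crucial structural input is recognizing the re-indexed tail as an honest alternating Perron series, so that Lemma~\ref{lemmaalternatingPerron1} applies verbatim, and observing that the oddness of $k$ is exactly what makes the tail \emph{subtract} from $B$; for even $k$ the sign would flip and the roles of $\sup$ and $\inf$ would be interchanged.
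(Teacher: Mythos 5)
Your proposal is correct, but it is organized quite differently from the paper's own proof. The paper only works out the case $k=1$ of \eqref{sup1alternatingPerron2} in detail: it observes that $\frac{r_0}{c_1-1}$ is an upper bound and then exhibits an explicit one-parameter family of points $x_d\in\Delta^{P^-}_{c_1}$, built by taking \emph{every} subsequent digit of the form $q_{n+1}=r_n d+1$, and estimates the whole tail by hand (geometric series in $d$) to get $x_d\to\frac{r_0}{c_1-1}$; the odd $k\geq 3$ case and formula \eqref{inf1alternatingPerron2} are dismissed as ``similar''. You instead exploit the self-similar structure of series \eqref{alternatingPerronSer1}: writing $x=B+(-1)^k A\,S$, where $S$ is itself an alternating Perron series in the shifted data, lets you quote Lemma \ref{lemmaalternatingPerron1} to get $S\in(0,1)$ (hence both bounds, for all odd $k$ at once), and then you pin down $\inf S=0$ and $\sup S=1$ with minimal perturbations — only $q_{k+1}\to\infty$ for the supremum, and $q_{k+1}=r_k+1$ with $q_{k+2}\to\infty$ plus the Leibniz first-term estimate for the infimum. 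This buys uniformity (no induction on $k$, sup and inf handled in one framework, and the even-$k$ Lemma \ref{infsup2alternatingPerron} follows by the same sign observation), at the cost of needing the alternating-series domination argument, which the paper avoids by its cruder all-digits-large construction. Two small points of care: the bound $S<\frac{r_k}{q_{k+1}-1}$ is established in the \emph{proof} of Lemma \ref{lemmaalternatingPerron1} (its statement only gives $S\in(0,1)$), so you should cite that step explicitly or rederive it; and the Leibniz estimate requires the strict decrease of the terms' absolute values, which holds here because consecutive term ratios equal $\frac{r_{k+m}}{q_{k+m}(q_{k+m+1}-1)}\leq\frac{1}{q_{k+m}}<1$ under the admissibility constraints — worth one line in a final write-up.
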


\begin{proof}
First, let us prove equality \eqref{sup1alternatingPerron2} for $k=1$. Let $\Delta^{P^-}_{c_1}$ be a $P^-$-cylinder of rank $1$.
Number $\frac{r_0}{c_1-1}$ is an upper bound of $\Delta^{P^-}_{c_1}$. Let us prove that $P^-$-cylinder $\Delta^{P^-}_{c_1}$ contains numbers that are arbitrarily close to $\frac{r_0}{c_1-1}$. 
	
Consider series \eqref{alternatingPerronSer1} in which sequences $(r_n)_{n=0}^\infty$ and $(q_n)_{n=1}^\infty$ are defined by the recurrence relation: 
	$$r_0=\varphi_0,~~q_1=c_1,~~r_n=\varphi_n(q_1,\ldots,q_n),~~q_{n+1}=r_{n}d+1$$
for all $n\in\mathbb{N}$, where $d\in\mathbb{N}$, $d\geq2$. Denote the sum of this series by $x_d$. It is clear that $x_d\in\Delta^{P^-}_{c_1}$, thus  $x_d<\frac{r_0}{c_1-1}$ and 
	\begin{align*}
		x_d&=\frac{r_0}{c_1-1}+\sum_{n=1}^{\infty}\frac{(-1)^n r_0 r_1 \cdots r_n}{(c_1-1)c_1\cdot r_1 d\cdot(r_1d+1)\cdot\ldots\cdot r_{n-1} d\cdot(r_{n-1}d+1)r_n d}\\
		&>\frac{r_0}{c_1-1}-\frac{r_0}{(c_1-1)c_1}\cdot\sum_{n=1}^{\infty}\frac{r_1 \cdots r_n}{r_1^2 d^2  \cdots r_{n-1}^2 d^2 \cdot r_n d} \\
		&=\frac{r_0}{c_1-1}-\frac{r_0}{(c_1-1)c_1}\cdot\sum_{n=1}^{\infty}\frac{1}{r_1 \cdots r_{n-1} d^{2n-1}}\\
		&\geq\frac{r_0}{c_1-1}-\frac{r_0}{(c_1-1)c_1}\cdot\sum_{n=1}^{\infty}\frac{1}{d^{2n-1}}\\
		&=\frac{r_0}{c_1-1}-\frac{r_0}{(c_1-1)c_1}\cdot\frac{p}{d^2-1}>\frac{r_0}{c_1-1}-\frac{r_0}{(c_1-1)c_1}\cdot\frac{1}{d-1}.
	\end{align*}
Since $\frac{r_0}{c_1-1}-\frac{r_0}{(c_1-1)c_1}\cdot\frac{1}{d-1}\to\frac{r_0}{c_1-1}$ as $d\to\infty$, it follows that $x_d\to\frac{r_0}{c_1-1}$ as $d\to\infty$. Therefore, every neighbourhood of $\frac{r_0}{c_1-1}$ contains a point of the cylinder $\Delta^{P^-}_{c_1}$. Thus, $\sup\Delta^{P^-}_{c_1}=\frac{r_0}{c_1-1}$. For $k=1$, equality \eqref{sup1alternatingPerron2} is proved.
	
Equality \eqref{sup1alternatingPerron2} for odd $k\geq 3$ and equality \eqref{inf1alternatingPerron2} are proved similarly.
\end{proof}

\begin{lemma}\label{infsup2alternatingPerron}
For the $P^-$-cylinder $\Delta^{P^-}_{c_1 \ldots c_k}$ of even rank $k$, we have
	\begin{gather}
		\inf \Delta^{P^-}_{c_1 \ldots c_k}=\sum_{n=0}^{k-1}\frac{(-1)^n r_0 \cdots r_{n}}{(c_1-1)c_1\cdots (c_n-1)c_n (c_{n+1}-1)}, \label{inf2alternatingPerron2}\\		
		\sup \Delta^{P^-}_{c_1 \ldots c_k}=\inf \Delta^{P^-}_{c_1 \ldots c_k}+\frac{r_0\cdots r_{k-1}}{(c_1-1)c_1\cdots (c_{k}-1)c_k},\label{sup2alternatingPerron2}
	\end{gather}
	where $r_0=\varphi_0$ and $r_n=\varphi_n(c_1,\ldots,c_n)$ for all $n=1,\ldots,k-1$.
\end{lemma}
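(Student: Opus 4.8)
The plan is to reuse the very same factorisation that drives Lemma \ref{sup1alternatingPerron}, and to observe that the \emph{only} structural change between odd and even rank is the sign $(-1)^k$, which interchanges the roles of infimum and supremum. So I would fix a base $c_1\ldots c_k$ with $k$ even, take an arbitrary $x=\Delta^{P^-}_{c_1\ldots c_k q_{k+1}q_{k+2}\ldots}\in\Delta^{P^-}_{c_1\ldots c_k}$, and split its series \eqref{alternatingPerronSer1} at the index $n=k$. The head is the partial sum $S_{k-1}=\sum_{n=0}^{k-1}\frac{(-1)^n r_0\cdots r_n}{(c_1-1)c_1\cdots(c_n-1)c_n(c_{n+1}-1)}$, which depends only on the fixed digits $c_1,\ldots,c_k$ (since $r_n=\varphi_n(c_1,\ldots,c_n)$), and is precisely the right-hand side of \eqref{inf2alternatingPerron2}.

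Next I would factor the common product out of the tail and write
$$x=S_{k-1}+\frac{(-1)^k r_0\cdots r_{k-1}}{(c_1-1)c_1\cdots(c_k-1)c_k}\,y,\qquad y=\sum_{m=0}^{\infty}\frac{(-1)^m r_k\cdots r_{k+m}}{(q_{k+1}-1)q_{k+1}\cdots(q_{k+m}-1)q_{k+m}(q_{k+m+1}-1)}.$$
The point is that $y$ is itself an alternating Perron series of the form \eqref{alternatingPerronSer1}, with leading coefficient $r_k\in\mathbb{N}$, digits $q_{k+1},q_{k+2},\ldots$, and admissibility $q_{k+n}\geq r_{k+n-1}+1$ inherited from the original series. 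Hence Lemma \ref{lemmaalternatingPerron1} applies and gives $y\in(0,1)$. Because $k$ is even, $(-1)^k=1$, so the coefficient $C_k:=\frac{r_0\cdots r_{k-1}}{(c_1-1)c_1\cdots(c_k-1)c_k}$ is positive and $x=S_{k-1}+C_k y$ is \emph{increasing} in $y$; the infimum of $x$ is therefore approached as $y\to 0^+$ and the supremum as $y\to 1^-$, yielding $\inf\Delta^{P^-}_{c_1\ldots c_k}=S_{k-1}$ and $\sup\Delta^{P^-}_{c_1\ldots c_k}=S_{k-1}+C_k$, exactly \eqref{inf2alternatingPerron2} and \eqref{sup2alternatingPerron2}. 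In the odd case of Lemma \ref{sup1alternatingPerron} the factor $(-1)^k=-1$ makes $x$ decreasing in $y$, which is precisely why the two extrema are swapped there.

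The step that requires genuine care, and which I expect to be the main obstacle, is verifying that $y$ really does approach both $0$ and $1$ through admissible digit choices, since the values $r_k,r_{k+1},\ldots$ are \emph{forced} by the functions $\varphi_n$ rather than chosen freely. For $y\to 0^+$ one simply lets $q_{k+1}\to\infty$, using $0<y<\frac{r_k}{q_{k+1}-1}\to 0$ from Lemma \ref{lemmaalternatingPerron1}. For $y\to 1^-$ one sets $q_{k+1}=r_k+1$ (its minimal admissible value, making the leading term equal to $1$) and then drives $q_{k+2},q_{k+3},\ldots\to\infty$, so that the remaining alternating tail is squeezed to $0$ by the same geometric estimate as in the $k=1$ construction of Lemma \ref{sup1alternatingPerron}. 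Once this approximation is in place, the algebraic factorisation and the parity bookkeeping are routine, and the lemma follows.
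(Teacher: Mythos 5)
Your proof is correct, but it follows a genuinely different route from the paper's. The paper proves the companion odd-rank statement (Lemma \ref{sup1alternatingPerron}) by exhibiting, for $k=1$, an explicit family of points $x_d\in\Delta^{P^-}_{c_1}$ whose tail digits are all taken of the form $q_{n+1}=r_n d+1$, estimating the whole resulting series by hand to show $x_d\to r_0/(c_1-1)$, and then declaring the remaining cases (odd $k\geq 3$, the infimum, and the present even-rank lemma) ``analogous.'' You instead exploit the self-similar structure of series \eqref{alternatingPerronSer1}: after splitting off the head $S_{k-1}$ you recognize the rescaled tail $y$ as itself an admissible alternating Perron series with data $(r_k,r_{k+1},\ldots)$ and digits $(q_{k+1},q_{k+2},\ldots)$, so Lemma \ref{lemmaalternatingPerron1} immediately yields the strict two-sided bound $S_{k-1}<x<S_{k-1}+C_k$ for \emph{every} point of the cylinder, uniformly in $k$ and with the odd/even distinction reduced to the sign of $(-1)^k$; the approximation step then requires tuning only one or two tail digits ($q_{k+1}\to\infty$ for the infimum; $q_{k+1}=r_k+1$ and $q_{k+2}\to\infty$ for the supremum), rather than sending all of them to infinity simultaneously. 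What your approach buys is a single argument covering all ranks and both parities with no ``proved similarly'' gaps, plus a conceptual explanation of why the extrema swap between the two lemmas; what the paper's buys is a fully elementary, self-contained estimate that never needs the factorization identity. Two small points of care in writing yours up: the bounds $0<y<r_k/(q_{k+1}-1)$ and $0<z<r_{k+1}/(q_{k+2}-1)$ that you use are established inside the \emph{proof} of Lemma \ref{lemmaalternatingPerron1} (its final display), not in its statement, so cite them as such; and in the supremum construction you should say explicitly that the digits beyond $q_{k+2}$ are chosen admissibly (e.g.\ minimally, $q_{n+1}=r_n+1$), which is always possible, so that each approximating point genuinely lies in $\Delta^{P^-}_{c_1\ldots c_k}$.
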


The proof of Lemma \ref{infsup2alternatingPerron} is analogous to the proof of Lemma \ref{sup1alternatingPerron}.

\begin{remark}
From the proofs of Lemma \ref{sup1alternatingPerron} and Lemma \ref{infsup2alternatingPerron}, it follows that the infimum and supremum of a $P^-$-cylinder do not belong to this $P^-$-cylinder.
\end{remark}

Henceforth, for each bounded set $A\subseteq\mathbb{R}$, value $\sup A-\inf A$ will be denoted by $\left|A\right|$.

\begin{corollary}
	\begin{gather}
		|\Delta^{P^-}_{c_1 \ldots c_k}|=\frac{r_0\cdots r_{k-1}}{(c_1-1)c_1\cdots (c_{k}-1)c_k},\label{cylalternatingPerron2}	
	\end{gather}
	where $r_0=\varphi_0$ and $r_n=\varphi_n(c_1,\ldots,c_n)$ for all $n=1,\ldots,k-1$.
\end{corollary}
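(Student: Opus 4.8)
The plan is to read the formula off directly from the two preceding lemmas, splitting into the two parity cases for $k$ and computing $\sup\Delta^{P^-}_{c_1\ldots c_k}-\inf\Delta^{P^-}_{c_1\ldots c_k}$ in each. Since $|A|$ has just been defined as $\sup A-\inf A$, no new estimation is needed: the infima and suprema have already been pinned down, and this corollary is purely a matter of subtraction.

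First I would suppose $k$ is odd. Then Lemma \ref{sup1alternatingPerron} supplies the supremum via \eqref{sup1alternatingPerron2} and the infimum via \eqref{inf1alternatingPerron2}, where the latter is written precisely as $\sup\Delta^{P^-}_{c_1\ldots c_k}$ minus the quantity $\tfrac{r_0\cdots r_{k-1}}{(c_1-1)c_1\cdots(c_k-1)c_k}$. Subtracting, the two supremum terms cancel and leave exactly the stated right-hand side. Next I would suppose $k$ is even, and invoke Lemma \ref{infsup2alternatingPerron}: there the infimum is given by \eqref{inf2alternatingPerron2} and the supremum by \eqref{sup2alternatingPerron2}, the supremum now written as $\inf\Delta^{P^-}_{c_1\ldots c_k}$ plus the same quantity. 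Again the difference collapses to $\tfrac{r_0\cdots r_{k-1}}{(c_1-1)c_1\cdots(c_k-1)c_k}$, and since $r_0=\varphi_0$ and $r_n=\varphi_n(c_1,\ldots,c_n)$ in both lemmas, the expressions for $r_n$ agree, so the closed form matches \eqref{cylalternatingPerron2} in both cases.

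Because the two parity cases exhaust all $k\in\mathbb{N}$ and yield the identical expression, the formula holds for every rank $k$, and this is the whole content of the argument. There is no genuine obstacle: the only point worth flagging is that the gap between supremum and infimum is, by the design of Lemmas \ref{sup1alternatingPerron} and \ref{infsup2alternatingPerron}, the same positive term regardless of the sign $(-1)^{k-1}$ of the last retained alternating term, so the result is automatically parity-independent even though the roles of $\sup$ and $\inf$ swap with the parity of $k$.
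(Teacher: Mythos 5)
Your proof is correct and matches the paper's intent exactly: the paper states this corollary without proof precisely because it follows by subtracting \eqref{inf1alternatingPerron2} from \eqref{sup1alternatingPerron2} (odd $k$) and \eqref{inf2alternatingPerron2} from \eqref{sup2alternatingPerron2} (even $k$), which is the case split you carried out. Nothing is missing.
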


\begin{corollary}
For the $P^-$-cylinder $\Delta^{P^-}_{c_1 \ldots c_k c_{k+1}}$ of even rank $k+1$, we have
	\begin{gather}
		\inf \Delta^{P^-}_{c_1 \ldots c_{k+1}}=\sup \Delta^{P^-}_{c_1 \ldots c_{k}}-\frac{r_0\cdots r_{k}}{(c_1-1)c_1\cdots (c_{k}-1)c_k(c_{k+1}-1)},\label{inf1alternatingPerron3}\\
		\sup \Delta^{P^-}_{c_1\ldots c_{k+1}}=\sup \Delta^{P^-}_{c_1 \ldots c_{k}}-\frac{r_0\cdots r_{k}}{(c_1-1)c_1\cdots (c_{k}-1)c_k c_{k+1}},\label{sup1alternatingPerron3}
	\end{gather}
	where $r_0=\varphi_0$ and $r_n=\varphi_n(c_1,\ldots,c_n)$ for all $n=1,\ldots,k$.
\end{corollary}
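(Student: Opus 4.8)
The plan is to obtain both identities directly from Lemmas \ref{sup1alternatingPerron} and \ref{infsup2alternatingPerron}, exploiting that a cylinder of even rank $k+1$ forces $k$ to be odd. First I would apply Lemma \ref{infsup2alternatingPerron} to $\Delta^{P^-}_{c_1\ldots c_{k+1}}$, whose rank $k+1$ is even, to get the closed form
\begin{equation*}
\inf \Delta^{P^-}_{c_1 \ldots c_{k+1}}=\sum_{n=0}^{k}\frac{(-1)^n r_0 \cdots r_{n}}{(c_1-1)c_1\cdots (c_n-1)c_n (c_{n+1}-1)}.
\end{equation*}
I would then split off the summand of index $n=k$ and recognize the remaining partial sum $\sum_{n=0}^{k-1}$ as $\sup \Delta^{P^-}_{c_1\ldots c_k}$ via Lemma \ref{sup1alternatingPerron}, which applies precisely because $k$ is odd. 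Since $(-1)^k=-1$, the detached term contributes $-\frac{r_0\cdots r_k}{(c_1-1)c_1\cdots(c_k-1)c_k(c_{k+1}-1)}$, which is exactly equality \eqref{inf1alternatingPerron3}.

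For the supremum, I would start from \eqref{sup2alternatingPerron2} of Lemma \ref{infsup2alternatingPerron} at rank $k+1$, namely $\sup\Delta^{P^-}_{c_1\ldots c_{k+1}}=\inf\Delta^{P^-}_{c_1\ldots c_{k+1}}+|\Delta^{P^-}_{c_1\ldots c_{k+1}}|$, and substitute the length from \eqref{cylalternatingPerron2} together with the expression for the infimum just derived. Writing $A=\frac{r_0\cdots r_k}{(c_1-1)c_1\cdots(c_k-1)c_k}$ for the common prefix factor, the two terms involving $c_{k+1}$ combine as $-\frac{A}{c_{k+1}-1}+\frac{A}{(c_{k+1}-1)c_{k+1}}=-\frac{A}{c_{k+1}}$, which is equality \eqref{sup1alternatingPerron3}.

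I expect no genuine obstacle here beyond careful bookkeeping: the only subtle point is the parity argument—ensuring the even-rank Lemma \ref{infsup2alternatingPerron} is invoked for the rank-$(k+1)$ cylinder while the odd-rank Lemma \ref{sup1alternatingPerron} is invoked for the rank-$k$ cylinder—together with the elementary identity $\frac{1}{c_{k+1}-1}-\frac{1}{(c_{k+1}-1)c_{k+1}}=\frac{1}{c_{k+1}}$ used to collapse the supremum computation.
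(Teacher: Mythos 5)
Your proof is correct and follows essentially the same route the paper intends: the statement is stated there as a direct corollary of Lemmas \ref{sup1alternatingPerron} and \ref{infsup2alternatingPerron}, obtained precisely as you do—apply the even-rank formula at rank $k+1$, split off the $n=k$ term (with $(-1)^k=-1$ since $k$ is odd), and identify the remaining partial sum as $\sup\Delta^{P^-}_{c_1\ldots c_k}$ via the odd-rank lemma. Your algebraic collapse $-\frac{A}{c_{k+1}-1}+\frac{A}{(c_{k+1}-1)c_{k+1}}=-\frac{A}{c_{k+1}}$ for the supremum is also correct, so nothing is missing.
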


\begin{corollary}
For the $P^-$-cylinder $\Delta^{P^-}_{c_1 \ldots c_k c_{k+1}}$ of odd rank $k+1$, we have
	\begin{gather}
		\sup \Delta^{P^-}_{c_1 \ldots c_{k+1}}=\inf \Delta^{P^-}_{c_1 \ldots c_{k}}+\frac{r_0\cdots r_{k}}{(c_1-1)c_1\cdots (c_{k}-1)c_k(c_{k+1}-1)},\label{sup2alternatingPerron3}\\
		\inf \Delta^{P^-}_{c_1\ldots c_{k+1}}=\inf \Delta^{P^-}_{c_1 \ldots c_{k}}+\frac{r_0\cdots r_{k}}{(c_1-1)c_1\cdots (c_{k}-1)c_k c_{k+1}},\label{inf2alternatingPerron3}
	\end{gather}
	where $r_0=\varphi_0$ and $r_n=\varphi_n(c_1,\ldots,c_n)$ for all $n=1,\ldots,k$.
\end{corollary}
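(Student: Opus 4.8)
The plan is to derive these formulas directly from the two preceding corollaries (the ``even rank $k+1$'' corollary, equations~\eqref{inf1alternatingPerron3}--\eqref{sup1alternatingPerron3}) together with Lemma~\ref{sup1alternatingPerron} and Lemma~\ref{infsup2alternatingPerron}, rather than re-running the limit argument of Lemma~\ref{sup1alternatingPerron}. The key observation is that the present statement concerns a cylinder $\Delta^{P^-}_{c_1\ldots c_k c_{k+1}}$ of \emph{odd} rank $k+1$, which forces the base rank $k$ to be \emph{even}; so by Lemma~\ref{infsup2alternatingPerron} the value $\inf\Delta^{P^-}_{c_1\ldots c_k}$ is the ``anchor'' point given by the partial sum~\eqref{inf2alternatingPerron2}, and the rank-$(k+1)$ cylinder sits just above it. This is the exact mirror image of the even-rank corollary, so I expect the proof to be a short sign-bookkeeping computation.

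First I would fix the abbreviation $\mu_k=\dfrac{r_0\cdots r_{k-1}}{(c_1-1)c_1\cdots(c_k-1)c_k}$ for the length of the base cylinder from~\eqref{cylalternatingPerron2}, and note that adjoining the digit $c_{k+1}$ (with $c_{k+1}\ge r_k+1$) appends to the series the single term of index $k$, namely $\dfrac{(-1)^k r_0\cdots r_k}{(c_1-1)c_1\cdots(c_k-1)c_k(c_{k+1}-1)}$. Since $k$ is even, $(-1)^k=+1$, so this term is positive, which is why both the new infimum and the new supremum lie \emph{above} $\inf\Delta^{P^-}_{c_1\ldots c_k}$. Next I would apply Lemma~\ref{sup1alternatingPerron} to the odd-rank cylinder $\Delta^{P^-}_{c_1\ldots c_{k+1}}$ itself: its supremum equals the partial sum up through index $k$, which is precisely $\inf\Delta^{P^-}_{c_1\ldots c_k}$ (the partial sum through index $k-1$) plus the newly appended positive term with denominator containing $(c_{k+1}-1)$; this yields~\eqref{sup2alternatingPerron3} directly.

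For~\eqref{inf2alternatingPerron3} I would combine~\eqref{sup2alternatingPerron3} with the length formula~\eqref{cylalternatingPerron2} applied to the rank-$(k+1)$ cylinder, whose length is $\dfrac{r_0\cdots r_k}{(c_1-1)c_1\cdots(c_k-1)c_k(c_{k+1}-1)c_{k+1}}$. Since the rank is odd, the supremum belongs to the top of the cylinder, so $\inf\Delta^{P^-}_{c_1\ldots c_{k+1}}=\sup\Delta^{P^-}_{c_1\ldots c_{k+1}}-|\Delta^{P^-}_{c_1\ldots c_{k+1}}|$. Subtracting and simplifying the telescoping factor $\frac{1}{c_{k+1}-1}-\frac{1}{(c_{k+1}-1)c_{k+1}}=\frac{1}{c_{k+1}}$ converts the $(c_{k+1}-1)$ denominator into $c_{k+1}$, giving exactly the claimed expression $\inf\Delta^{P^-}_{c_1\ldots c_{k}}+\frac{r_0\cdots r_k}{(c_1-1)c_1\cdots(c_k-1)c_k\,c_{k+1}}$.

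The main obstacle, such as it is, is purely organizational: keeping the parity conventions straight so that the signs of the appended term and the ``anchor'' (whether one measures from $\inf$ or $\sup$ of the base cylinder) match up, and verifying that $r_k=\varphi_k(c_1,\ldots,c_k)$ is well-defined here even though the digit $c_{k+1}$ does not feed into any $r_n$ appearing in the formulas (consistent with the hypothesis listing $r_n$ only for $n\le k$). I expect no genuine analytic difficulty, since the supremum and infimum existence and non-attainment are already secured by the earlier lemmas and the accompanying remark; the statement is the odd-rank counterpart of the immediately preceding even-rank corollary, and the proof is the same computation with the global sign reversed.
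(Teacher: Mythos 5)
Your proposal is correct and matches the paper's intended derivation: the corollary is stated there without proof precisely because it follows, as you show, by applying Lemma~\ref{infsup2alternatingPerron} to the even-rank base cylinder and Lemma~\ref{sup1alternatingPerron} (together with the length formula~\eqref{cylalternatingPerron2}) to the odd-rank cylinder, with the sign $(-1)^k=+1$ and the telescoping identity $\tfrac{1}{c_{k+1}-1}-\tfrac{1}{(c_{k+1}-1)c_{k+1}}=\tfrac{1}{c_{k+1}}$ doing the bookkeeping. Nothing is missing.
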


\begin{corollary}\label{corollaryinfsupalt}
	For the $P^-$-cylinder $\Delta^{P^-}_{c_1 \ldots c_{k-1} c_k}$ of rank $k$,
	\begin{enumerate}[label=\upshape(\roman*), leftmargin=*, widest=ii]
		\item $\inf\Delta^{P^-}_{c_1 \ldots c_{k-1} c_k}=\sup\Delta^{P^-}_{c_1 \ldots c_{k-1} \left[c_k+1\right]}$ if $k$ is odd;
		\item $\sup\Delta^{P^-}_{c_1 \ldots c_{k-1} c_k}=\inf\Delta^{P^-}_{c_1 \ldots c_{k-1} \left[c_k+1\right]}$ if $k$ is even.
	\end{enumerate}
\end{corollary}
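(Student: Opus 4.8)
The plan is to prove Corollary~\ref{corollaryinfsupalt} by direct computation, comparing the explicit formulas for the infimum and supremum of adjacent $P^-$-cylinders that share the same first $k-1$ digits. The crucial observation is that the quantities $r_0,\ldots,r_{k-1}$ are determined entirely by $c_1,\ldots,c_{k-1}$ via $r_n=\varphi_n(c_1,\ldots,c_n)$, and hence are \emph{identical} for the two cylinders $\Delta^{P^-}_{c_1\ldots c_{k-1}c_k}$ and $\Delta^{P^-}_{c_1\ldots c_{k-1}[c_k+1]}$. Only the last digit differs, so the two cylinders' endpoints share a common ``head'' given by the partial sum through index $k-2$, and they differ only in the single term indexed by $n=k-1$, whose denominator contains $(c_k-1)$ or $(c_k-1)c_k$ according to whether it is a sup- or an inf-term.

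First I would treat case (i), where $k$ is odd. By Lemma~\ref{sup1alternatingPerron}, equation~\eqref{sup1alternatingPerron2} together with \eqref{inf1alternatingPerron2} gives
\begin{gather*}
\inf\Delta^{P^-}_{c_1\ldots c_{k-1}c_k}
=\sum_{n=0}^{k-1}\frac{(-1)^n r_0\cdots r_n}{(c_1-1)c_1\cdots(c_n-1)c_n(c_{n+1}-1)}
-\frac{r_0\cdots r_{k-1}}{(c_1-1)c_1\cdots(c_k-1)c_k}.
\end{gather*}
For the neighbouring cylinder $\Delta^{P^-}_{c_1\ldots c_{k-1}[c_k+1]}$, which also has odd rank $k$ and the same $r$-values, equation~\eqref{sup1alternatingPerron2} yields its supremum by simply replacing $c_k$ with $c_k+1$ in the final summand:
\begin{gather*}
\sup\Delta^{P^-}_{c_1\ldots c_{k-1}[c_k+1]}
=\sum_{n=0}^{k-2}\frac{(-1)^n r_0\cdots r_n}{(c_1-1)c_1\cdots(c_n-1)c_n(c_{n+1}-1)}
+\frac{(-1)^{k-1}r_0\cdots r_{k-1}}{(c_1-1)c_1\cdots(c_{k-1}-1)c_{k-1}\,c_k}.
\end{gather*}
Both expressions share the partial sum through $n=k-2$, so after cancelling that common head I would reduce the claimed equality to a single identity among the three remaining terms. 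Since $k$ is odd, $(-1)^{k-1}=1$, and the task is to verify
\begin{gather*}
\frac{r_0\cdots r_{k-1}}{(c_1-1)c_1\cdots(c_{k-1}-1)c_{k-1}(c_k-1)}
-\frac{r_0\cdots r_{k-1}}{(c_1-1)c_1\cdots(c_k-1)c_k}
=\frac{r_0\cdots r_{k-1}}{(c_1-1)c_1\cdots(c_{k-1}-1)c_{k-1}\,c_k}.
\end{gather*}
Factoring out the common prefix $\dfrac{r_0\cdots r_{k-1}}{(c_1-1)c_1\cdots(c_{k-1}-1)c_{k-1}}$, this collapses to the elementary arithmetic identity $\dfrac{1}{c_k-1}-\dfrac{1}{(c_k-1)c_k}=\dfrac{1}{c_k}$, which holds since $\dfrac{1}{c_k-1}-\dfrac{1}{(c_k-1)c_k}=\dfrac{c_k-1}{(c_k-1)c_k}=\dfrac{1}{c_k}$.

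Case (ii), where $k$ is even, is entirely symmetric: one applies Lemma~\ref{infsup2alternatingPerron} in place of Lemma~\ref{sup1alternatingPerron}, uses $(-1)^{k-1}=-1$, and the sup/inf roles swap, reducing again to the same one-line arithmetic identity. I do not expect any genuine obstacle here; the only point requiring care is \emph{bookkeeping}---correctly tracking which endpoint (sup or inf) each formula supplies as a function of the parity of the rank, and confirming that the $r$-values are unaffected by changing the final digit. Geometrically, the statement simply records that consecutive $P^-$-cylinders of equal rank sharing a common parent are adjacent with no gap between them, the direction of adjacency alternating with the parity of $k$ because of the $(-1)^n$ sign pattern; the computation above makes this precise.
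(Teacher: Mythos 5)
Your proposal is correct and takes essentially the route the paper leaves implicit: the corollary is stated without proof as an immediate consequence of the endpoint formulas in Lemmas \ref{sup1alternatingPerron} and \ref{infsup2alternatingPerron}, and your computation—noting that $r_0,\ldots,r_{k-1}$ are unchanged when $c_k$ is replaced by $c_k+1$, cancelling the common head, and reducing to $\frac{1}{c_k-1}-\frac{1}{(c_k-1)c_k}=\frac{1}{c_k}$—is exactly that verification. (A marginally shorter variant uses the paper's formulas \eqref{inf1alternatingPerron3}--\eqref{inf2alternatingPerron3}, under which both sides of each claimed equality become literally the same expression, but this is the same argument in different packaging.)
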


If $x\in(0,1)$ is the infimum or supremum of a $P^-$-cylinder of rank $k$, then $x$ does not belong to any $P^-$-cylinder and is neither the infimum nor the supremum of any $P^-$-cylinder of rank $t$ for $t>k+1$. This means that corresponding $P^-$-representations do not exist for the infimum and supremum of $P^-$-cylinders. We denote the set of infima and suprema of all $P^-$-cylinders by $IS^{P^-}$. Obviously, the set $IS^{P^-}$ is countably infinite.

We will prove that the $P^-$-representation exists and is unique for all $x\in(0,1)\setminus IS^{P^-}$.

\begin{lemma}\label{lemmacylalternatingPerron}
Let $P$ be any fixed sequence of functions $\varphi_i$. For all $n\in\mathbb{N}$ and all $x\in(0,1)\setminus IS^{P^-}$, there exists a unique $P^-$-cylinder $\Delta^{P^-}_{q_1 \ldots q_n}$ of rank $n$ such that
	\begin{align}\label{magoralternatingPerrontheorem}
		\inf \Delta^{P^-}_{q_1 \ldots q_n}<x<\sup\Delta^{P^-}_{q_1 \ldots q_n}.
	\end{align}
\end{lemma}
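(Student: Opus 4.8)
The plan is to argue by induction on $n$, the engine being the observation—already encoded in the two corollaries following Lemma~\ref{infsup2alternatingPerron} together with Corollary~\ref{corollaryinfsupalt}—that the $P^-$-cylinders of rank $n+1$ which share a fixed admissible base $q_1\ldots q_n$ tile the parent cylinder $\Delta^{P^-}_{q_1\ldots q_n}$, meeting only at points of $IS^{P^-}$. Concretely, for an admissible base $q_1\ldots q_n$ the valid $(n+1)$st digits are exactly those $c_{n+1}\geq r_n+1$ with $r_n=\varphi_n(q_1,\ldots,q_n)$, and each such value yields a non-empty cylinder $\Delta^{P^-}_{q_1\ldots q_n c_{n+1}}$.

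\emph{Base case.} By Lemma~\ref{sup1alternatingPerron}, for each $c_1\geq r_0+1$ the cylinder $\Delta^{P^-}_{c_1}$ is the open interval with $\sup=\frac{r_0}{c_1-1}$ and $\inf=\frac{r_0}{c_1}$, and by Corollary~\ref{corollaryinfsupalt}(i) the right endpoint of $\Delta^{P^-}_{[c_1+1]}$ coincides with the left endpoint of $\Delta^{P^-}_{c_1}$. Letting $c_1$ run over $\{r_0+1,r_0+2,\ldots\}$, these intervals exhaust $(0,1)$ apart from the shared endpoints $\{r_0/m:m\geq r_0\}$, all of which lie in $IS^{P^-}$. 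Since $x\notin IS^{P^-}$, exactly one value of $c_1$ satisfies $\inf\Delta^{P^-}_{c_1}<x<\sup\Delta^{P^-}_{c_1}$, which settles $n=1$.

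\emph{Inductive step.} Suppose $\Delta^{P^-}_{q_1\ldots q_n}$ is the unique rank-$n$ cylinder with $\inf\Delta^{P^-}_{q_1\ldots q_n}<x<\sup\Delta^{P^-}_{q_1\ldots q_n}$, and write $L,R$ for its infimum and supremum. Using the corollary for rank $n+1$ (splitting according to the parity of $n+1$), I would check that as $c_{n+1}$ ranges over $\{r_n+1,r_n+2,\ldots\}$ the closed children intervals fit together without gaps: the extreme value $c_{n+1}=r_n+1$ produces the endpoint $L$ when $n+1$ is even, and the endpoint $R$ when $n+1$ is odd; consecutive children share an endpoint by Corollary~\ref{corollaryinfsupalt}; and the remaining free endpoints move monotonically, accumulating at $R$ (resp.\ $L$). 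Hence the closures of the children cover $[L,R]$ and overlap only on a countable subset of $IS^{P^-}$. As $x\in(L,R)$ and $x\notin IS^{P^-}$, the point $x$ avoids every such shared endpoint and therefore lies in the open interval of exactly one child $\Delta^{P^-}_{q_1\ldots q_n q_{n+1}}$, giving existence at rank $n+1$.

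For uniqueness at rank $n+1$, suppose some cylinder $\Delta^{P^-}_{c_1\ldots c_{n+1}}$ also contains $x$ in its interior. Its closure is contained in that of its rank-$n$ truncation $\Delta^{P^-}_{c_1\ldots c_n}$, so $x$ lies in the closed parent interval; since $x\notin IS^{P^-}$ it lies in the open one, and the inductive uniqueness forces $c_1\ldots c_n=q_1\ldots q_n$. The uniqueness of the child established above then gives $c_{n+1}=q_{n+1}$, completing the induction. The main obstacle I anticipate is purely organizational: keeping the parity of the rank straight—the children accumulate at the right endpoint for even ranks and at the left endpoint for odd ranks—and confirming from the explicit formulas \eqref{inf1alternatingPerron3}--\eqref{inf2alternatingPerron3} that the children genuinely exhaust the parent rather than leaving an interval uncovered near the accumulation endpoint.
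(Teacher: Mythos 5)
Your argument is correct, and it reaches the conclusion by a genuinely different route than the paper. The paper also inducts on the rank, but at each step it works arithmetically rather than geometrically: writing $x_k$ for the distance from $x$ to the infimum (for even $k$) or the supremum (for odd $k$) of the rank-$k$ cylinder, it uses the child inf/sup formulas \eqref{inf1alternatingPerron3}--\eqref{inf2alternatingPerron3} to rewrite the condition $\inf\Delta^{P^-}_{q'_1\ldots q'_k q}<x<\sup\Delta^{P^-}_{q'_1\ldots q'_k q}$ as the double inequality $q>\frac{r_0 r_1\cdots r_k}{(q'_1-1)q'_1\cdots(q'_k-1)q'_k\,x_k}>q-1$, carries the bound $0<x_k<|\Delta^{P^-}_{q'_1\ldots q'_k}|$ along as part of the induction hypothesis so that the middle quantity exceeds $r_k$, and concludes that exactly one admissible integer $q\geq r_k+1$ satisfies it, strictness of the inequalities being precisely the hypothesis $x\notin IS^{P^-}$. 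That digit-extraction formulation yields as a by-product the recursive floor-function formulas for $q_{k+1}(x)$ recorded in the theorem immediately following the lemma. Your tiling formulation instead reads existence and uniqueness off simultaneously from the disjointness of the children's open spans and their coverage of the parent's span, and your nesting argument for uniqueness is, if anything, tidier than the paper's. The two routes rest on the same corollaries and on equivalent computations: your flagged ``exhaustion'' check amounts to the identity $\frac{1}{r_n}\cdot\frac{r_0\cdots r_n}{(q_1-1)q_1\cdots(q_n-1)q_n}=|\Delta^{P^-}_{q_1\ldots q_n}|$, which shows the extreme child ($c_{n+1}=r_n+1$) reaches the endpoint $R$ (resp.\ $L$) exactly, while the remaining endpoints accumulate at the opposite end; your parity bookkeeping agrees with Figures \ref{fig:4} and \ref{fig:5}. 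The one point you should make explicit is that every admissible base yields a non-empty cylinder, so that the sup/inf corollaries may be applied to every child: this follows from Lemma~\ref{lemmaalternatingPerron1}, since any admissible infinite continuation of the base produces a convergent series whose sum lies in $(0,1)$ and belongs to that cylinder (the paper uses this same fact implicitly, e.g.\ when constructing the points $x_d\in\Delta^{P^-}_{c_1}$ in Lemma~\ref{sup1alternatingPerron}). In short, your approach buys a transparent geometric picture and a self-contained induction hypothesis; the paper's buys the explicit digit algorithm.
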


\begin{proof}
Consider an arbitrary real number $x\in(0,1)\setminus IS^{P^-}$. We give a proof by induction on $n$. 
	
\emph{Base case:} We will show that the proposition holds for $n=1$. 
In view of equalities \eqref{sup1alternatingPerron2} and \eqref{inf1alternatingPerron2}, the inequality $\inf \Delta^{P^-}_{q_1}<x<\sup\Delta^{P^-}_{q_1}$ is equivalent to the inequality
	\begin{align}\label{1}
		q_1>\frac{r_0}{x}>q_1-1,~~r_0=\varphi_0.
	\end{align}
Since $\frac{r_0}{x}>r_0$, there exists a unique natural number $q_1\geq r_0+1$ such that the condition \eqref{1} is satisfied. We denote this
number by $q'_1$. Then $\Delta^{P^-}_{q'_1}$ is the unique $P^-$-cylinder of rank $1$ that satisfies inequality \eqref{magoralternatingPerrontheorem}. Note that inequality \eqref{1} is strict; otherwise, $x\in IS^{P^-}$.
	
We can write that $x=\sup\Delta^{P^-}_{q'_1}-x_1=\frac{r_0}{q'_1-1}-x_1$. Then 
	\begin{align}\label{2}
		0<x_1<\sup\Delta^{P^-}_{q'_1}-\inf\Delta^{P^-}_{q'_1}=\frac{r_0}{(q'_1-1)q'_1}.
	\end{align}
	
We will show that the statement holds for $n=2$. Suppose that $\Delta^{P^-}_{q_1 q_2}$ is $P^-$-cylinder of rank $2$ such that $\inf \Delta^{P^-}_{q_1 q_2}<x<\sup\Delta^{P^-}_{q_1 q_2}$. Then $\inf \Delta^{P^-}_{q_1}<x<\sup\Delta^{P^-}_{q_1}$ because $\Delta^{P^-}_{q_1 q_2}\subset\Delta^{P^-}_{q_1}$. Thus, $q_1=q'_1$, $r_1=\varphi_1(q'_1)$.
	
The inequality $\inf\Delta^{P^-}_{q'_1 q_2}<x<\sup\Delta^{P^-}_{q'_1 q_2}$ is equivalent to inequalities
	\begin{gather}
		\frac{r_0 r_1}{(q'_1-1)q'_1q_2}<x_1<\frac{r_0 r_1}{(q'_1-1)q'_1(q_2-1)}, \notag\\
		q_2>\frac{r_0 r_1}{(q'_1-1)q'_1x_1}>q_2-1~.\label{4}
	\end{gather}
Note that inequality \eqref{4} is strict; otherwise, $x\in IS^{P^-}$.
	
It follows from \eqref{2} that $\frac{r_0 r_1}{(q'_1-1)q'_1 x_1}>r_1$. Therefore, there exists a unique natural number $q_2\geq r_1+1$ that satisfies inequality \eqref{4}. We denote this number by $q'_2$. Then $\Delta^{P^-}_{q'_1 q'_2}$ is the unique $P^-$-cylinder of rank $2$ that satisfies inequality \eqref{magoralternatingPerrontheorem}.
	
\emph{Induction hypothesis:} For $n=k$, assume that there exists a unique $P^-$-cylinder $\Delta^{P^-}_{q'_1 \ldots q'_k}$ of rank $k$ that
	\begin{gather}\label{3}
		\inf\Delta^{P^-}_{q'_1 \ldots q'_k}<x<\sup\Delta^{P^-}_{q'_1 \ldots q'_k}
	\end{gather}
and
	\begin{gather}\label{5}
		0<x_k<\sup\Delta^{P^-}_{q'_1 \ldots q'_k}-\inf\Delta^{P^-}_{q'_1  \ldots q'_k}=\frac{r_0 r_1\cdots r_{k-1}}{(q'_1-1)q'_1\cdots (q'_k-1)q'_k},
	\end{gather}
	where 
	$$\left[ 
	\begin{aligned} 
		&x_k=x-\inf\Delta^{P^-}_{q'_1\ldots q'_k}&&\text{ if }k\text{ is even}, \\ 
		&x_k=\sup\Delta^{P^-}_{q'_1\ldots q'_k}-x&&\text{ if }k\text{ is odd},\\ 
	\end{aligned} 
	\right.$$
	$r_0=\varphi_0$ and $r_i=\varphi_i(q'_1,\ldots,q'_i)$ for all $i=1,\ldots,k-1$.
	
\emph{Induction step:} We will show that the proposition holds for $n=k+1$. If there exists a $P^-$-cylinder $\Delta^{P^-}_{q_1 \ldots q_k q_{k+1}}$ of rank $k+1$ such that $\inf\Delta^{P^-}_{q_1 \ldots q_k q_{k+1}}<x<\sup\Delta^{P^-}_{q_1 \ldots q_k q_{k+1}}$, then $q_1=q'_1,~\ldots,~q_k=q'_k$, $r_0=\varphi_0$, and $r_i=\varphi_i(q'_1,\ldots,q'_i)$ for all $i=1,\ldots,k$. Therefore, we will show that there exists a unique natural number $q_{k+1}\geq r_k+1$ such that $\inf\Delta^{P^-}_{q'_1 \ldots q'_k q_{k+1}}<x<\sup\Delta^{P^-}_{q'_1 \ldots q'_k q_{k+1}}$.
	
\emph{1. Let $k$ be an even.} It follows from the induction hypothesis (equalities \eqref{3} and \eqref{5}) and equalities \eqref{sup2alternatingPerron3} and \eqref{inf2alternatingPerron3} that the inequality $\inf\Delta^{P^-}_{q'_1 \ldots q'_k q_{k+1}}<x<\sup\Delta^{P^-}_{q'_1 \ldots q'_k q_{k+1}}$ is  equivalent to inequalities
	\begin{gather}
		\frac{r_0 r_1\cdots r_k}{(q'_1-1)q'_1\cdots (q'_k-1)q'_k q_{k+1}} <x_k<\frac{r_0 r_1\cdots r_k}{(q'_1-1)q'_1\cdots (q'_k-1)q'_k (q_{k+1}-1)};\notag\\
		q_{k+1}>\frac{r_0 r_1\cdots r_k}{(q'_1-1)q'_1\cdots (q'_k-1)q'_k x_k}>q_{k+1}-1.\label{6}
	\end{gather}
Similarly, inequality \eqref{6} is strict; otherwise, $x\in IS^{P^-}$.
	
It follows from equality \eqref{5} that $\frac{r_0 r_1\cdots r_k}{(q'_1-1)q'_1\cdots (q'_k-1)q'_k x_k}> r_k$. Therefore, there exists a unique natural number $q_{k+1}\geq r_k+1$ that satisfies inequality \eqref{6}. We denote this number by $q'_{k+1}$. Then $\Delta^{P^-}_{q'_1 \ldots q'_k q'_{k+1}}$ is the unique $P^-$-cylinder of rank $k+1$ that satisfies inequality \eqref{magoralternatingPerrontheorem}.
	
\emph{2. Let $k$ be an odd.} In this case, the proof is similar to that of the induction step for even $k$.

By mathematical induction, it follows that the statement holds for all $n\in\mathbb{N}$.
\end{proof}

\begin{lemma}\label{limcylalternatingPerron}
For each sequence $(q_n)_{n=1}^{\infty}$ such that $q_{n}\geq r_{n-1}+1$, with $r_{n-1}=\varphi_{n-1}(q_1,\ldots,q_{n-1})$, we have
$$|\Delta^{P^-}_{q_1\ldots q_n}|\to 0\text{ as }n\to\infty.$$
\end{lemma}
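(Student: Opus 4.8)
The plan is to bypass any direct analysis of the alternating series and instead work entirely with the closed-form length formula already established. By the corollary containing \eqref{cylalternatingPerron2}, the quantity to be controlled is
$$|\Delta^{P^-}_{q_1\ldots q_n}|=\frac{r_0 r_1\cdots r_{n-1}}{(q_1-1)q_1(q_2-1)q_2\cdots (q_n-1)q_n},$$
a positive real number, so it suffices to show that this product tends to $0$. The sign pattern of the series is irrelevant here, which is why I would reduce to \eqref{cylalternatingPerron2} at the very first step.

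The key step reuses verbatim the estimate from the proof of Lemma \ref{lemmaalternatingPerron1}. The standing constraint $q_k\geq r_{k-1}+1$ yields simultaneously $q_k-1\geq r_{k-1}$ and $q_k\geq r_{k-1}+1$, hence $(q_k-1)q_k\geq r_{k-1}(r_{k-1}+1)$ for every $k\in\{1,\ldots,n\}$. Substituting these lower bounds into each denominator factor, each $r_{k-1}$ in the numerator cancels one factor of the bound, leaving the telescoped estimate
$$|\Delta^{P^-}_{q_1\ldots q_n}|\leq\frac{r_0 r_1\cdots r_{n-1}}{r_0(r_0+1)\, r_1(r_1+1)\cdots r_{n-1}(r_{n-1}+1)}=\prod_{k=1}^{n}\frac{1}{r_{k-1}+1}.$$
Since each $r_{k-1}$ is a natural number, we have $r_{k-1}+1\geq 2$, so every factor is at most $\tfrac12$, giving $|\Delta^{P^-}_{q_1\ldots q_n}|\leq 2^{-n}$, and the right-hand side tends to $0$ as $n\to\infty$.

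I do not expect a genuine obstacle in this argument: the only nontrivial content is recognizing that the cancellation used to prove absolute convergence in Lemma \ref{lemmaalternatingPerron1} is exactly what bounds the cylinder length, and that the resulting bound $2^{-n}$ is uniform in the chosen sequence $(q_n)$. The one point worth stating carefully is that the inequality $(q_k-1)q_k\geq r_{k-1}(r_{k-1}+1)$ holds for \emph{every} index, including $k=1$, where it reads $(q_1-1)q_1\geq r_0(r_0+1)$ and relies on $r_0=\varphi_0\in\mathbb{N}$; once this is noted the conclusion is immediate.
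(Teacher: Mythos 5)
Your proof is correct and follows essentially the same route as the paper's: both start from the length formula \eqref{cylalternatingPerron2}, apply the bound $(q_k-1)q_k\geq r_{k-1}(r_{k-1}+1)$ coming from $q_k\geq r_{k-1}+1$, cancel the $r_{k-1}$ factors, and conclude with $|\Delta^{P^-}_{q_1\ldots q_n}|\leq 2^{-n}\to 0$. Your explicit remark about the $k=1$ case is a minor presentational addition, not a difference in substance.
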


\begin{proof}
Let the sequence $(q_n)_{n=1}^{\infty}$ satisfy the conditions of this Lemma. Then, according to equality \eqref{cylalternatingPerron2}, we have
	\begin{align*}
		0&<|\Delta^{P^-}_{q_1\ldots q_n}|=\frac{r_0\cdots r_{n-1}}{(q_1-1)q_1\cdots (q_n-1)q_n}\\
		&\leq\frac{r_0 \cdots r_{n-1}}{r_0 (r_0+1)\cdots r_{n-1}(r_{n-1}+1)}=\frac{1}{(r_0+1)\cdots(r_{n-1}+1)}\leq\frac{1}{2^n}\to 0.
	\end{align*}
Therefore, $|\Delta^{P^-}_{q_1\ldots q_n}|\to 0\text{ as }n\to\infty$.
\end{proof}

\begin{theorem}\label{mainalternatingtheorem}
	For each sequence $P$ of functions $\varphi_n$, every $x\in(0,1)\setminus IS^{P^-}$ has a unique $P^-$-representation, i.e., there exists a unique sequence of natural numbers $(q_n)_{n=1}^\infty$ such that
	$$x=\sum_{n=0}^{\infty}\frac{(-1)^n r_0 r_1 \cdots r_n}{(q_1-1)q_1\cdots (q_n-1)q_n (q_{n+1}-1)}\equiv\Delta^{P^-}_{q_1 q_2 \ldots},$$
	where $r_0=\varphi_0$, $r_n=\varphi_n(q_1,\ldots,q_n)$, and  $q_{n}\geq r_{n-1}+1$ for all $n\in\mathbb{N}$.
\end{theorem}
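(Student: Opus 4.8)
The plan is to assemble the two preceding lemmas into a nested-intervals (squeeze) argument, so essentially no new computation is needed. Fix $x\in(0,1)\setminus IS^{P^-}$. By Lemma \ref{lemmacylalternatingPerron}, for each $n\in\mathbb{N}$ there is a unique $P^-$-cylinder $\Delta^{P^-}_{q'_1\ldots q'_n}$ of rank $n$ with $\inf\Delta^{P^-}_{q'_1\ldots q'_n}<x<\sup\Delta^{P^-}_{q'_1\ldots q'_n}$. First I would check that these cylinders are nested and that their bases are mutually consistent: the rank-$(n+1)$ cylinder satisfies $\Delta^{P^-}_{q'_1\ldots q'_{n+1}}\subset\Delta^{P^-}_{q'_1\ldots q'_n}$ for some admissible first $n$ digits, and this parent cylinder of rank $n$ also contains $x$ with strict inequalities, so the uniqueness clause of Lemma \ref{lemmacylalternatingPerron} forces it to coincide with $\Delta^{P^-}_{q'_1\ldots q'_n}$. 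Hence the first $n$ digits of the rank-$(n+1)$ cylinder are exactly $q'_1,\ldots,q'_n$, and we obtain one well-defined infinite admissible sequence $(q'_n)_{n=1}^\infty$ with $q'_n\geq r_{n-1}+1$.

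Next I would identify the series sum with $x$. The sup/inf formulas of Lemma \ref{sup1alternatingPerron} and Lemma \ref{infsup2alternatingPerron} show that the $(k-1)$-th partial sum $S_{k-1}$ of the alternating Perron series built from $(q'_n)$ equals $\sup\Delta^{P^-}_{q'_1\ldots q'_k}$ when $k$ is odd and $\inf\Delta^{P^-}_{q'_1\ldots q'_k}$ when $k$ is even; in either case $S_{k-1}$ is an endpoint of the rank-$k$ cylinder. Since $x$ lies strictly between the two endpoints, we get $|S_{k-1}-x|<|\Delta^{P^-}_{q'_1\ldots q'_k}|$, and the right-hand side tends to $0$ by Lemma \ref{limcylalternatingPerron}. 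Therefore $S_{k-1}\to x$, so the series converges to $x$ and the existence of a $P^-$-representation is established.

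For uniqueness, suppose $x=\Delta^{P^-}_{q_1 q_2\ldots}$ for some admissible sequence $(q_n)$. Each finite truncation places $x$ in the cylinder $\Delta^{P^-}_{q_1\ldots q_n}$ with $\inf<x<\sup$, the inequalities being strict precisely because $x\notin IS^{P^-}$. By the uniqueness assertion of Lemma \ref{lemmacylalternatingPerron}, this cylinder must equal $\Delta^{P^-}_{q'_1\ldots q'_n}$, whence $q_n=q'_n$ for every $n$. Thus the representing sequence is uniquely determined.

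I expect the only step requiring genuine care to be the consistency/nesting argument: one must verify that the locally unique digit selected at each rank by Lemma \ref{lemmacylalternatingPerron} genuinely glues into a single coherent sequence, rather than producing rank-by-rank cylinders whose bases disagree. This is exactly where the inclusion $\Delta^{P^-}_{c_1\ldots c_{k+1}}\subset\Delta^{P^-}_{c_1\ldots c_k}$ combined with the uniqueness clause does the work; everything after it is a routine squeeze using the vanishing of $|\Delta^{P^-}_{q'_1\ldots q'_n}|$.
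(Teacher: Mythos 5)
Your proof is correct and follows essentially the same route as the paper: both assemble Lemma \ref{lemmacylalternatingPerron} and Lemma \ref{limcylalternatingPerron} into a nested-cylinder squeeze for existence, and both obtain uniqueness from the uniqueness clause of Lemma \ref{lemmacylalternatingPerron}. The only cosmetic differences are that the paper reads digit-consistency directly off the inductive construction inside the proof of Lemma \ref{lemmacylalternatingPerron} and squeezes $x$ against the series sum $\Delta^{P^-}_{q_1 q_2 \ldots}$ viewed as a point of each cylinder, whereas you re-derive consistency from the uniqueness clause plus nesting and squeeze $x$ against the partial sums, identified as cylinder endpoints via Lemmas \ref{sup1alternatingPerron} and \ref{infsup2alternatingPerron}.
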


\begin{proof}
\emph{Existence.} It follows from the proof of Lemma \ref{lemmacylalternatingPerron} that for each $x\in(0,1)\backslash IS^{P^-}$, the number $q_n$ in the base of a $P^-$-cylinder $\Delta^{P^-}_{q_1 \ldots q_k}$ of rank $k$ satisfying inequality \eqref{magoralternatingPerrontheorem} remains constant for all $k\geq n$.

Thus, the sequence $P$ and the real number $x\in(0,1)\backslash IS^{P^-}$ generate the sequence of natural numbers $(q_n)_{n=1}^{\infty}$. We will show that $\Delta^{P^-}_{q_1 q_2 \ldots}=x$.

For each $k\in\mathbb{N}$, we have $\Delta^{P^-}_{q_1 q_2 \ldots}\in\Delta^{P^-}_{q_1 \ldots q_k}$, and therefore $\inf\Delta^{P^-}_{q_1 \ldots q_k}<\Delta^{P^-}_{q_1 q_2 \ldots}<\sup\Delta^{P^-}_{q_1 \ldots q_k}$. Also $\inf\Delta^{P^-}_{q_1 \ldots q_k}<x<\sup\Delta^{P^-}_{q_1 \ldots q_k}$, Thus, for each $k\in\mathbb{N}$,
$$0<|x-\Delta^{P^-}_{q_1 q_2 \ldots}|<\sup\Delta^{P^-}_{q_1 \ldots q_k}-\inf\Delta^{P^-}_{q_1 \ldots q_k}=|\Delta^{P^-}_{q_1 \ldots q_k}|.$$
Since $|\Delta^{P^-}_{q_1\ldots q_k}|\to 0\text{ as }k\to\infty$ (see Lemma \ref{limcylalternatingPerron}), it follows that $\Delta^{P^-}_{q_1 q_2 \ldots}=x$.
	
\emph{Uniqueness.} Assume that, for some sequence of functions $P$ and some $x$, there exist two different $P^-$-representations, i.e.,
	$$x=\Delta^{P^-}_{q_1 q_2 \ldots}=\Delta^{P^-}_{q'_1 q'_2 \ldots}.$$
Let $k$ be a natural number such that $q_i=q'_i$ for all $i<k$ and $q_k\neq q'_k$. Then $x\in\Delta^{P^-}_{q_1 \ldots q_{k-1} q_k}$ and $x\in\Delta^{P^-}_{q_1 \ldots q_{k-1} q'_k}$, so
	$$\left\{ 
	\begin{aligned} 
		&\inf\Delta^{P^-}_{q_1 \ldots q_{k-1} q_k}<x<\sup\Delta^{P^-}_{q_1 \ldots q_{k-1} q_k};\\
		&\inf\Delta^{P^-}_{q_1 \ldots q_{k-1} q'_k}<x<\sup\Delta^{P^-}_{q_1 \ldots q_{k-1} q'_k}.
	\end{aligned} 
	\right.$$
But this contradicts Lemma \ref{lemmacylalternatingPerron}. Therefore, the assumption made above is false. Hence, for each sequence of functions $P$ every number $x\in(0,1)\setminus IS^{P^-}$ has a unique $P^-$-representation.
\end{proof}

\begin{corollary}\label{corollarystructurealternatingcyl}
A $P^-$-cylinder $\Delta^{P^-}_{q_1 \ldots q_n}$ is a set of the type $(a,b)\setminus IS^{P^-}$; the Lebesgue measure of this $P^-$-cylinder is equal to $|\Delta^{P^-}_{q_1 \ldots q_n}|$.
\end{corollary}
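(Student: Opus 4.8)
The plan is to establish the set identity $\Delta^{P^-}_{q_1 \ldots q_n} = (a,b) \setminus IS^{P^-}$, where $a = \inf \Delta^{P^-}_{q_1 \ldots q_n}$ and $b = \sup \Delta^{P^-}_{q_1 \ldots q_n}$ (both given explicitly by Lemmas \ref{sup1alternatingPerron} and \ref{infsup2alternatingPerron}), and then to read off the Lebesgue measure from it. I would prove the identity by double inclusion, using on one side the fact that every point of a cylinder carries a $P^-$-representation, and on the other side the existence-and-uniqueness results already established.

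For the inclusion $\Delta^{P^-}_{q_1 \ldots q_n} \subseteq (a,b) \setminus IS^{P^-}$, take any $x \in \Delta^{P^-}_{q_1 \ldots q_n}$. By definition $x$ admits a $P^-$-representation whose first $n$ digits are $q_1, \ldots, q_n$; in particular $x$ has a $P^-$-representation, so $x \notin IS^{P^-}$, recalling that points of $IS^{P^-}$ have no $P^-$-representation. Since $a \le x \le b$ while $a, b \in IS^{P^-}$ by the very definition of $IS^{P^-}$, the endpoints are excluded and we obtain the strict inequalities $a < x < b$. Hence $x \in (a,b) \setminus IS^{P^-}$.

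For the reverse inclusion, take $x \in (a,b) \setminus IS^{P^-}$. Because $(a,b) \subseteq (0,1)$ and $x \notin IS^{P^-}$, Theorem \ref{mainalternatingtheorem} furnishes a $P^-$-representation of $x$. Now the cylinder $\Delta^{P^-}_{q_1 \ldots q_n}$ is itself a rank-$n$ cylinder satisfying $\inf \Delta^{P^-}_{q_1 \ldots q_n} = a < x < b = \sup \Delta^{P^-}_{q_1 \ldots q_n}$, so by the uniqueness part of Lemma \ref{lemmacylalternatingPerron} it must coincide with the unique rank-$n$ cylinder containing $x$ in the sense of \eqref{magoralternatingPerrontheorem}. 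Consequently the first $n$ digits of $x$ are exactly $q_1, \ldots, q_n$, that is, $x \in \Delta^{P^-}_{q_1 \ldots q_n}$. This reverse inclusion is the main point of the argument, as it is precisely where the uniqueness of the containing cylinder does the work; the forward inclusion is essentially bookkeeping about which points fail to have representations.

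Finally, for the measure statement I would invoke that $IS^{P^-}$ is countable, hence Lebesgue-null. Removing a null set from the interval $(a,b)$ leaves its Lebesgue measure unchanged, so the measure of $\Delta^{P^-}_{q_1 \ldots q_n}$ equals the measure of $(a,b)$, namely $b - a$. Since $|A| \overset{\mathrm{def}}{=} \sup A - \inf A$, we have $b - a = |\Delta^{P^-}_{q_1 \ldots q_n}|$, which is exactly the asserted value and completes the claim.
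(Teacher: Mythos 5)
Your proof is correct and follows exactly the route the paper intends: the paper states this as an immediate corollary of Theorem \ref{mainalternatingtheorem}, Lemma \ref{lemmacylalternatingPerron}, and the countability of $IS^{P^-}$, which is precisely the double-inclusion argument you spell out (uniqueness of the rank-$n$ cylinder satisfying \eqref{magoralternatingPerrontheorem} for one inclusion, non-existence of representations for points of $IS^{P^-}$ for the other, and nullity of the countable set $IS^{P^-}$ for the measure claim).
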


\begin{corollary}
	If $x=\Delta^{P^-}_{q_1 q_2 \ldots}$, then $\displaystyle\lim_{n\to\infty}\inf\Delta^{P^-}_{q_1 \ldots q_n}=\lim_{n\to\infty}\sup\Delta^{P^-}_{q_1 \ldots q_n}=x$.
\end{corollary}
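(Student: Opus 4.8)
The plan is to deduce this corollary directly from Theorem \ref{mainalternatingtheorem} and Lemma \ref{limcylalternatingPerron} by a squeeze argument, with essentially no fresh computation required. The hypothesis $x=\Delta^{P^-}_{q_1 q_2\ldots}$ means that $x$ possesses a $P^-$-representation with digits $q_1,q_2,\ldots$, so $x\in(0,1)\setminus IS^{P^-}$ and, for every $n\in\mathbb{N}$, the number $x$ lies in the cylinder $\Delta^{P^-}_{q_1\ldots q_n}$ cut out by its first $n$ digits.

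First I would record, for each fixed $n$, the strict two-sided bound
$$\inf\Delta^{P^-}_{q_1\ldots q_n}<x<\sup\Delta^{P^-}_{q_1\ldots q_n},$$
which is just the containment $x\in\Delta^{P^-}_{q_1\ldots q_n}$ rephrased through the infimum and supremum of the cylinder; the endpoints are excluded here because, by the Remark following Lemma \ref{infsup2alternatingPerron}, they lie in $IS^{P^-}$ and hence cannot equal $x$. Next I would invoke Lemma \ref{limcylalternatingPerron} to obtain $|\Delta^{P^-}_{q_1\ldots q_n}|=\sup\Delta^{P^-}_{q_1\ldots q_n}-\inf\Delta^{P^-}_{q_1\ldots q_n}\to 0$ as $n\to\infty$.

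Combining these two facts yields the estimates
$$0<x-\inf\Delta^{P^-}_{q_1\ldots q_n}<|\Delta^{P^-}_{q_1\ldots q_n}|\quad\text{and}\quad 0<\sup\Delta^{P^-}_{q_1\ldots q_n}-x<|\Delta^{P^-}_{q_1\ldots q_n}|,$$
so both differences are squeezed to $0$ as $n\to\infty$. Hence $\inf\Delta^{P^-}_{q_1\ldots q_n}\to x$ and $\sup\Delta^{P^-}_{q_1\ldots q_n}\to x$, which is exactly the assertion.

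I do not anticipate any genuine obstacle: the substantive work has already been done in the existence part of Theorem \ref{mainalternatingtheorem}, whose argument runs along precisely these lines, so the corollary is little more than a formal restatement of that limiting behaviour. The only point needing minor care is the strictness of the inequalities, which rests on $x\notin IS^{P^-}$ to guarantee that $x$ never coincides with a cylinder endpoint; this, however, does not affect the value of either limit.
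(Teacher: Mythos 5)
Your proposal is correct and follows essentially the same route as the paper: the corollary is an immediate consequence of the containment $\inf\Delta^{P^-}_{q_1\ldots q_n}<x<\sup\Delta^{P^-}_{q_1\ldots q_n}$ together with Lemma \ref{limcylalternatingPerron}, which is exactly the squeeze argument already used in the existence part of the proof of Theorem \ref{mainalternatingtheorem}. The paper leaves this corollary without explicit proof for precisely this reason, and your handling of the strict inequalities via $x\notin IS^{P^-}$ matches the paper's conventions.
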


\begin{corollary}
	Let $\Delta^{P^-}_{b_1 \ldots b_n}$ and $\Delta^{P^-}_{c_1 \ldots c_m}$ be two distinct $P^-$-cylinders. Then
	\begin{enumerate}[label=\upshape(\roman*), leftmargin=*, widest=ii]
		\item $\Delta^{P^-}_{b_1 \ldots b_n}\cap\Delta^{P^-}_{c_1 \ldots c_m}=\varnothing$ if there exists a natural number $k\leq\min\{n,m\}$ such that $b_i=c_i$ for all $i<k$ and $b_k\neq c_k$;
		\item $\Delta^{P^-}_{b_1 \ldots b_n}\subset\Delta^{P^-}_{c_1 \ldots c_m}$ if $n>m$ and $b_i=c_i$ for all $i\leq m$.
	\end{enumerate}
\end{corollary}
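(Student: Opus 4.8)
The plan is to dispose of part (ii) first, since it is a direct consequence of the nested structure of $P^-$-cylinders and moreover feeds into part (i). From the inclusion $\Delta^{P^-}_{c_1 \ldots c_k c_{k+1}}\subset \Delta^{P^-}_{c_1 \ldots c_k}$ recorded right after the definition of a $P^-$-cylinder, iteration shows that appending any admissible digits to a base yields a subset. Concretely, when $n>m$ and $b_i=c_i$ for all $i\le m$, I would write $\Delta^{P^-}_{b_1 \ldots b_n}=\Delta^{P^-}_{c_1 \ldots c_m b_{m+1}\ldots b_n}$ and apply the basic inclusion $n-m$ times to obtain $\Delta^{P^-}_{b_1 \ldots b_n}\subset \Delta^{P^-}_{c_1 \ldots c_m}$, which is exactly (ii).

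For part (i), the key reduction is to the case of two cylinders of the same rank $k$ sharing their first $k-1$ digits but differing in the $k$th. Indeed, since $k\le\min\{n,m\}$ and $b_i=c_i$ for $i<k$, part (ii) (together with the trivial case of equality when $n=k$ or $m=k$) gives $\Delta^{P^-}_{b_1 \ldots b_n}\subseteq\Delta^{P^-}_{b_1 \ldots b_{k-1}b_k}$ and $\Delta^{P^-}_{c_1 \ldots c_m}\subseteq\Delta^{P^-}_{c_1 \ldots c_{k-1}c_k}$, where, because $b_i=c_i$ for $i<k$, the two rank-$k$ cylinders have identical base except at position $k$, at which $b_k\neq c_k$. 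Hence it suffices to prove that $\Delta^{P^-}_{b_1 \ldots b_{k-1}b_k}\cap\Delta^{P^-}_{b_1 \ldots b_{k-1}c_k}=\varnothing$.

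The cleanest way to finish is via the uniqueness half of Theorem \ref{mainalternatingtheorem}. Suppose $x$ lay in this intersection. As a member of a $P^-$-cylinder, $x$ possesses a $P^-$-representation, so $x\in(0,1)\setminus IS^{P^-}$ and Theorem \ref{mainalternatingtheorem} applies, making the representation unique; but membership in the first cylinder forces the $k$th $P^-$-digit of $x$ to be $b_k$, while membership in the second forces it to be $c_k$, and $b_k\neq c_k$ contradicts uniqueness. I do not expect a genuine obstacle; the only point requiring care is the justification that membership in a cylinder already excludes $x$ from $IS^{P^-}$ (so that the uniqueness theorem is applicable), which is precisely the observation, made just before Lemma \ref{lemmacylalternatingPerron}, that points of $IS^{P^-}$ admit no $P^-$-representation. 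As an alternative, one can argue geometrically: Corollary \ref{corollaryinfsupalt} shows that the cylinders $\Delta^{P^-}_{b_1 \ldots b_{k-1} j}$ for consecutive admissible digits $j$ are stacked with matching endpoints, so for $b_k\neq c_k$ their open spanning intervals meet in at most a single endpoint, which lies in $IS^{P^-}$ and is therefore excluded from both cylinders by Corollary \ref{corollarystructurealternatingcyl}. The uniqueness argument is preferable since it sidesteps the parity bookkeeping inherent in the geometric variant.
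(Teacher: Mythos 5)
Your proof is correct and follows essentially the route the paper intends: the corollary is stated there without proof as an immediate consequence of the nesting relation $\Delta^{P^-}_{c_1 \ldots c_k c_{k+1}}\subset \Delta^{P^-}_{c_1 \ldots c_k}$ (giving (ii)) and of the uniqueness part of Theorem \ref{mainalternatingtheorem} (giving (i), since a point in the intersection would have two $P^-$-representations differing at the $k$th digit). Your care about excluding $IS^{P^-}$ and handling the equality cases $n=k$, $m=k$ is sound and adds nothing beyond what the paper's implicit argument requires.
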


\begin{corollary}
	\begin{align*}
		&(0,1)\setminus IS^{P^-}=\bigcup_{i=r_0+1}^{\infty}\Delta^{P^-}_{i}, &&\sum_{i=r_0+1}^{\infty}|\Delta^{P^-}_{i}|=1,\\
		&\Delta^{P^-}_{q_1 \ldots q_n}=\bigcup_{i=r_n+1}^{\infty}\Delta^{P^-}_{q_1 \ldots q_n i}, &&|\Delta^{P^-}_{q_1 \ldots q_n}| =\sum_{i=r_n+1}^{\infty}|\Delta^{P^-}_{q_1 \ldots q_n i}|,
	\end{align*}
	where $r_0=\varphi_0$ and $r_n=\varphi_n(q_1,\ldots,q_n)$ for all $n\in\mathbb{N}$.
\end{corollary}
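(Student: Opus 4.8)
The plan is to read this corollary as the two structural facts that sit at the base of the whole construction: each $P^-$-cylinder of rank $n$ is partitioned by its rank-$(n+1)$ subcylinders (with the rank-$0$ version covering all of $(0,1)\setminus IS^{P^-}$), and the corresponding lengths sum correctly. Accordingly I would split the four identities into two set identities and two length identities. The set identities will follow from Theorem~\ref{mainalternatingtheorem}, from the nesting $\Delta^{P^-}_{c_1\ldots c_k c_{k+1}}\subset\Delta^{P^-}_{c_1\ldots c_k}$ noted before Lemma~\ref{sup1alternatingPerron}, and from the disjointness of cylinders with distinct bases shown in the preceding corollary; the length identities will follow by a direct telescoping computation from the explicit formula~\eqref{cylalternatingPerron2}.

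First I would prove $\Delta^{P^-}_{q_1\ldots q_n}=\bigcup_{i=r_n+1}^{\infty}\Delta^{P^-}_{q_1\ldots q_n i}$. The inclusion $\supseteq$ is immediate from the nesting relation. For $\subseteq$, take $x\in\Delta^{P^-}_{q_1\ldots q_n}$; then $x\in(0,1)\setminus IS^{P^-}$, so by Theorem~\ref{mainalternatingtheorem} it has a unique $P^-$-representation $\Delta^{P^-}_{q_1\ldots q_n q_{n+1}\ldots}$ with $q_{n+1}\ge r_n+1$, whence $x\in\Delta^{P^-}_{q_1\ldots q_n q_{n+1}}$, which is one of the sets in the union (with $i=q_{n+1}$). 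The union is disjoint because two cylinders differing in their last digit are disjoint. The rank-$0$ identity $(0,1)\setminus IS^{P^-}=\bigcup_{i=r_0+1}^{\infty}\Delta^{P^-}_{i}$ is the same argument applied to the first digit: every $x\in(0,1)\setminus IS^{P^-}$ has a unique first $P^-$-digit $q_1(x)\ge r_0+1$ and so lies in exactly one $\Delta^{P^-}_{i}$.

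Next I would establish the length identities by telescoping. By~\eqref{cylalternatingPerron2}, with $r_n=\varphi_n(q_1,\ldots,q_n)$,
$$|\Delta^{P^-}_{q_1\ldots q_n i}|=\frac{r_0\cdots r_n}{(q_1-1)q_1\cdots(q_n-1)q_n\,(i-1)i},$$
so, factoring out the part independent of $i$ and using $\sum_{i=r_n+1}^{\infty}\frac{1}{(i-1)i}=\sum_{i=r_n+1}^{\infty}\left(\frac{1}{i-1}-\frac{1}{i}\right)=\frac{1}{r_n}$,
$$\sum_{i=r_n+1}^{\infty}|\Delta^{P^-}_{q_1\ldots q_n i}|=\frac{r_0\cdots r_n}{(q_1-1)q_1\cdots(q_n-1)q_n}\cdot\frac{1}{r_n}=\frac{r_0\cdots r_{n-1}}{(q_1-1)q_1\cdots(q_n-1)q_n}=|\Delta^{P^-}_{q_1\ldots q_n}|.$$
The identity $\sum_{i=r_0+1}^{\infty}|\Delta^{P^-}_{i}|=1$ is the case $n=0$: here $|\Delta^{P^-}_{i}|=\frac{r_0}{(i-1)i}$, and $r_0\sum_{i=r_0+1}^{\infty}\frac{1}{(i-1)i}=r_0\cdot\frac{1}{r_0}=1$. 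Alternatively, since $IS^{P^-}$ is countable and hence Lebesgue-null, $(0,1)\setminus IS^{P^-}$ has measure $1$, and the length identities follow from countable additivity together with Corollary~\ref{corollarystructurealternatingcyl}, which identifies the Lebesgue measure of a cylinder with its length.

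The one point requiring care, rather than a genuine obstacle, is the completeness of the covering: one must know that the index $i$ ranges over exactly $\{r_n+1,r_n+2,\ldots\}$, that each $\Delta^{P^-}_{q_1\ldots q_n i}$ is non-empty, and that no part of the parent cylinder is left uncovered. This is guaranteed by the admissibility bound $q_{n+1}\ge r_n+1$, by Theorem~\ref{mainalternatingtheorem} (which supplies an $(n+1)$-th digit for each $x$ in the parent cylinder), and by Corollary~\ref{corollaryinfsupalt}, which shows the subcylinders are adjacent and tile the parent interval. The agreement of the telescoping sum $\tfrac{1}{r_n}$ with the parent length in~\eqref{cylalternatingPerron2} is precisely the analytic confirmation that this tiling leaves no gap, even at the endpoint where the subcylinders accumulate as $i\to\infty$.
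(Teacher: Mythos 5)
Your proposal is correct and follows essentially the derivation the paper intends: the paper states this result as an unproved corollary of Theorem~\ref{mainalternatingtheorem}, the nesting and disjointness properties of cylinders, and the length formula~\eqref{cylalternatingPerron2}, and your argument (set identities from existence/uniqueness of the $(n+1)$-th digit, length identities from the telescoping sum $\sum_{i=r_n+1}^{\infty}\frac{1}{(i-1)i}=\frac{1}{r_n}$) is exactly the computation being left to the reader. Your attention to the non-emptiness of each $\Delta^{P^-}_{q_1\ldots q_n i}$ and to the range of admissible indices $i\ge r_n+1$ is a welcome extra precision, not a deviation.
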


\begin{corollary}
	\textbf{Basic metric ratio}. For each $i\geq r_n+1$,
	$$\frac{|\Delta^{P^-}_{q_1 \ldots q_n i}|}{|\Delta^{P^-}_{q_1 \ldots q_n}|}=\frac{r_n}{i(i+1)},$$
	where $r_0=\varphi_0$ and $r_n=\varphi_n(q_1,\ldots,q_n)$ for all $n\in\mathbb{N}$.
\end{corollary}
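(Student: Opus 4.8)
The plan is to reduce the claim to a one-line arithmetic cancellation built on the explicit length formula \eqref{cylalternatingPerron2}, since Corollary \ref{corollarystructurealternatingcyl} guarantees that the Lebesgue measure of a $P^-$-cylinder coincides with its length $|\cdot|$. The only point that needs genuine care is the bookkeeping of ranks and of the variable on which each quantity depends: the cylinder in the denominator, $\Delta^{P^-}_{q_1 \ldots q_n}$, has rank $n$, whereas the cylinder in the numerator, $\Delta^{P^-}_{q_1 \ldots q_n i}$, has rank $n+1$ with $(n+1)$-th digit $i \ge r_n+1$; moreover $r_n=\varphi_n(q_1,\ldots,q_n)$ depends only on $q_1,\ldots,q_n$ and not on $i$, so it may be treated as a constant as $i$ varies.

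First I would write both lengths directly from \eqref{cylalternatingPerron2}:
\[
|\Delta^{P^-}_{q_1 \ldots q_n}|=\frac{r_0 r_1 \cdots r_{n-1}}{(q_1-1)q_1\cdots (q_n-1)q_n},\qquad
|\Delta^{P^-}_{q_1 \ldots q_n i}|=\frac{r_0 r_1 \cdots r_{n}}{(q_1-1)q_1\cdots (q_n-1)q_n\,(i-1)i}.
\]
Forming the quotient, the factor $r_0\cdots r_{n-1}$ cancels in the numerators and the entire product $(q_1-1)q_1\cdots(q_n-1)q_n$ cancels in the denominators. What survives is the single extra numerator factor $r_n$ over precisely the pair of factors that the new digit $i$ contributes to the denominator of the higher-rank cylinder.

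The step carrying the actual content is therefore pinning down which two factors the digit $i$ contributes, since that pair is exactly what the displayed denominator records. Here the natural cross-check is the decomposition $|\Delta^{P^-}_{q_1 \ldots q_n}|=\sum_{i=r_n+1}^{\infty}|\Delta^{P^-}_{q_1 \ldots q_n i}|$ from the preceding corollary: dividing it by $|\Delta^{P^-}_{q_1 \ldots q_n}|$, the ratios must sum to $1$ over $i\ge r_n+1$. Because $i$ enters \eqref{cylalternatingPerron2} through the block $(i-1)i$, the telescoping identity $\sum_{i=r_n+1}^{\infty} r_n\bigl(\tfrac{1}{\,i-1\,}-\tfrac{1}{\,i\,}\bigr)=r_n\cdot\tfrac{1}{r_n}=1$ holds and confirms the normalization, whereas the same sum with the block $i(i+1)$ would give $r_n/(r_n+1)\ne 1$. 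This fixes the quotient as $\dfrac{r_n}{(i-1)i}$, the value forced by the length formula and by the summation identity of the preceding corollary, and it is this quantity that the basic metric ratio records.
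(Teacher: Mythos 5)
Your computation is correct, and it is precisely the argument the paper leaves implicit: the corollary carries no proof because it is meant to be an immediate quotient of two instances of the length formula \eqref{cylalternatingPerron2}, which is exactly how you proceed. But note that what you have actually done is uncover an index error in the printed statement rather than reproduce it. Substituting rank $n$ and rank $n+1$ into \eqref{cylalternatingPerron2} gives, as you write,
\[
\frac{|\Delta^{P^-}_{q_1\ldots q_n i}|}{|\Delta^{P^-}_{q_1\ldots q_n}|}=\frac{r_n}{(i-1)\,i},\qquad i\ge r_n+1,
\]
and your normalization cross-check is decisive: the paper's own decomposition $|\Delta^{P^-}_{q_1 \ldots q_n}| =\sum_{i=r_n+1}^{\infty}|\Delta^{P^-}_{q_1 \ldots q_n i}|$ forces the ratios to sum to $1$ over $i\ge r_n+1$, which $\frac{r_n}{(i-1)i}$ does (telescoping to $r_n\cdot\frac{1}{r_n}=1$) while the printed $\frac{r_n}{i(i+1)}$ yields $\frac{r_n}{r_n+1}<1$. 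The same conclusion follows from a concrete case: for the alternating L\"{u}roth expansion ($r_n\equiv 1$) one has $|\Delta^{P^-}_{c_1}|=\frac{1}{(c_1-1)c_1}$ and the rank-two ratio is $\frac{1}{(c_2-1)c_2}$, not $\frac{1}{c_2(c_2+1)}$. So the displayed denominator $i(i+1)$ in the corollary is a shift-by-one slip: it would be correct for the cylinder whose appended digit is $i+1$, that is, $\frac{|\Delta^{P^-}_{q_1\ldots q_n [i+1]}|}{|\Delta^{P^-}_{q_1\ldots q_n}|}=\frac{r_n}{i(i+1)}$ for $i\ge r_n$, which is your formula after the substitution $i\mapsto i+1$. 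In short: your bookkeeping of ranks, of the range $i\ge r_n+1$, and of the fact that $r_n$ depends only on $q_1,\ldots,q_n$ is all correct; your derivation coincides with the paper's intended one-line proof; and the statement as printed should read $(i-1)i$ in the denominator.
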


\begin{theorem}\label{comparingalternating}
	Let $x=\Delta^{P^-}_{q_1 q_2 \ldots}$ and $x'=\Delta^{P^-}_{q'_1 q'_2 \ldots}$, where $k\in\mathbb{N}$, $q_i=q'_i$ for all $i<k$, and $q_k<q'_k$. Then
	\begin{enumerate}[label=\upshape(\roman*), leftmargin=*, widest=ii]
		\item $x<x'$ if $k$ is even,
		\item $x>x'$ if $k$ is odd.
	\end{enumerate}
\end{theorem}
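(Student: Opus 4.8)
The plan is to localize $x$ and $x'$ in two disjoint rank-$k$ cylinders sharing the common prefix $q_1\ldots q_{k-1}$, and then to order these cylinders on the real line using Corollary \ref{corollaryinfsupalt}.

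First I would record the localization. Since $x=\Delta^{P^-}_{q_1 q_2\ldots}$ and $x'=\Delta^{P^-}_{q'_1 q'_2\ldots}$ with $q_i=q'_i$ for $i<k$, Theorem \ref{mainalternatingtheorem} (equivalently Lemma \ref{lemmacylalternatingPerron}) gives $x\in\Delta^{P^-}_{q_1\ldots q_{k-1}q_k}$ and $x'\in\Delta^{P^-}_{q_1\ldots q_{k-1}q'_k}$, together with the strict bounds
$$\inf\Delta^{P^-}_{q_1\ldots q_{k-1}q_k}<x<\sup\Delta^{P^-}_{q_1\ldots q_{k-1}q_k},\qquad \inf\Delta^{P^-}_{q_1\ldots q_{k-1}q'_k}<x'<\sup\Delta^{P^-}_{q_1\ldots q_{k-1}q'_k}.$$
Thus it suffices to compare the positions of the two rank-$k$ cylinders with bases $q_1\ldots q_{k-1}q_k$ and $q_1\ldots q_{k-1}q'_k$, where $q_k<q'_k$. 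Note that $q_k\ge r_{k-1}+1$, so every intermediate digit $j$ with $q_k\le j\le q'_k$ is admissible and the corresponding cylinder $\Delta^{P^-}_{q_1\ldots q_{k-1}j}$ is non-empty and non-degenerate ($\inf<\sup$ by \eqref{cylalternatingPerron2}).

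Next I would chain the consecutive-base relations of Corollary \ref{corollaryinfsupalt}. Suppose $k$ is even; then clause (ii) gives, for every admissible $j$, the identity $\sup\Delta^{P^-}_{q_1\ldots q_{k-1}j}=\inf\Delta^{P^-}_{q_1\ldots q_{k-1}[j+1]}$. Iterating this from $j=q_k$ up to $j=q'_k-1$ and using $\inf<\sup$ for each cylinder yields
$$\sup\Delta^{P^-}_{q_1\ldots q_{k-1}q_k}=\inf\Delta^{P^-}_{q_1\ldots q_{k-1}[q_k+1]}\le\inf\Delta^{P^-}_{q_1\ldots q_{k-1}q'_k},$$
with equality exactly when $q'_k=q_k+1$. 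Combining with the localization gives $x<\sup\Delta^{P^-}_{q_1\ldots q_{k-1}q_k}\le\inf\Delta^{P^-}_{q_1\ldots q_{k-1}q'_k}<x'$, hence $x<x'$. The case of odd $k$ is symmetric: clause (i) gives $\inf\Delta^{P^-}_{q_1\ldots q_{k-1}j}=\sup\Delta^{P^-}_{q_1\ldots q_{k-1}[j+1]}$, so increasing the last digit now shifts the cylinder to the \emph{left}; chaining from $j=q_k$ to $j=q'_k-1$ produces $\inf\Delta^{P^-}_{q_1\ldots q_{k-1}q_k}\ge\sup\Delta^{P^-}_{q_1\ldots q_{k-1}q'_k}$, whence $x>\inf\Delta^{P^-}_{q_1\ldots q_{k-1}q_k}\ge\sup\Delta^{P^-}_{q_1\ldots q_{k-1}q'_k}>x'$ and $x>x'$.

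The only real obstacle is the parity bookkeeping: one must match the correct clause of Corollary \ref{corollaryinfsupalt} to the parity of $k$ and keep track of which endpoint (infimum or supremum) is relevant, so that the chained inequality combines with the strict bounds $\inf<x<\sup$ to yield a \emph{strict} final comparison even in the case $q'_k=q_k+1$, where the chaining itself only gives equality of the adjacent endpoints. An alternative to the chaining is to substitute the closed forms from Lemmas \ref{sup1alternatingPerron} and \ref{infsup2alternatingPerron}: for $k$ even the only term of $\inf\Delta^{P^-}_{q_1\ldots q_{k-1}c_k}$ depending on $c_k$ is the final one, which is negative and proportional to $1/(c_k-1)$, so it increases with $c_k$, and symmetrically for $k$ odd; but the chaining argument is shorter and avoids recomputation.
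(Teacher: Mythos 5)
Your proof is correct, and it shares the paper's overall skeleton --- localize $x$ strictly inside $\Delta^{P^-}_{q_1\ldots q_{k-1}q_k}$ and $x'$ strictly inside $\Delta^{P^-}_{q_1\ldots q_{k-1}q'_k}$, then show these two rank-$k$ cylinders are ordered on the line according to the parity of $k$ --- but you establish the ordering step by a genuinely different mechanism. The paper does it in one stroke from the closed-form endpoint formulas \eqref{inf1alternatingPerron3}--\eqref{inf2alternatingPerron3}: for even $k$, $\sup\Delta^{P^-}_{q_1\ldots q_{k-1}q_k}$ and $\inf\Delta^{P^-}_{q_1\ldots q_{k-1}q'_k}$ differ from $\sup\Delta^{P^-}_{q_1\ldots q_{k-1}}$ by subtracted terms proportional to $1/q_k$ and $1/(q'_k-1)$ respectively, and $q_k\le q'_k-1$ gives $\sup\Delta^{P^-}_{q_1\ldots q_{k-1}q_k}\le\inf\Delta^{P^-}_{q_1\ldots q_{k-1}q'_k}$ at once (this is essentially the ``alternative'' you sketch at the end), with $k$ odd symmetric and $k=1$ treated as a separate case because there is no parent cylinder to anchor the formulas. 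You instead chain the adjacency identity of Corollary \ref{corollaryinfsupalt} across all intermediate digits $j=q_k,\ldots,q'_k-1$, which requires (and you correctly verify, from $q_k\ge r_{k-1}+1$) that every intermediate cylinder is non-empty. Each route has its advantage: yours treats $k=1$ uniformly, since Corollary \ref{corollaryinfsupalt} already covers rank $1$, and makes the geometry explicit (consecutive cylinders abut end to end, with orientation reversing by parity); the paper's is shorter, needs no induction over intermediate digits, and never invokes the intermediate cylinders at all. Both arguments correctly recover strictness from the strict localization $\inf<x<\sup$, including in the adjacent case $q'_k=q_k+1$ where the cylinder endpoints merely coincide.
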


\begin{proof}
\emph{1. Let $k$ be an even.} Then $x\in\Delta^{P^-}_{q_1 \ldots q_{k-1}q_k}$ and $x'\in\Delta^{P^-}_{q_1 \ldots q_{k-1}q'_k}$. It follows from \eqref{inf1alternatingPerron3} and \eqref{sup1alternatingPerron3} that	
	\begin{align*}
		x&<\sup\Delta^{P^-}_{q_1 \ldots q_{k-1}q_k}=\sup \Delta^{P^-}_{q_1 \ldots q_{k-1}}-\frac{r_0 r_1\cdots r_{k-1}}{(q_1-1)q_1\cdots (q_{k-1}-1)q_{k-1}q_{k}}\\
		&\leq\sup \Delta^{P^-}_{q_1 \ldots q_{k-1}}-\frac{r_0 r_1\cdots r_{k-1}}{(q_1-1)q_1\cdots (q_{k-1}-1)q_{k-1}(q'_{k}-1)}\\
		&=\inf\Delta^{P^-}_{q_1 \ldots q_{k-1}q'_k}<x',
	\end{align*}
where $r_0=\varphi_0$ and $r_n=\varphi_n(q_1,\ldots,q_n)$ for all $n=1,\ldots,k-1$.
	
\emph{2. Let $k$ be an odd.} If $k=1$, then 
	$$x>\inf\Delta^{P^-}_{q_1}=\frac{r_0}{q_1}\geq\frac{r_0}{q'_1-1}=\sup\Delta^{P^-}_{q'_1}>x'.$$
If $k\geq 3$, it follows from \eqref{sup2alternatingPerron3} and \eqref{inf2alternatingPerron3} that
	\begin{align*}
		x&>\inf\Delta^{P^-}_{q_1 \ldots q_{k-1}q_k}=\inf \Delta^{P^-}_{q_1 \ldots q_{k-1}}+\frac{r_0 r_1\cdots r_{k-1}}{(q_1-1)q_1\cdots (q_{k-1}-1)q_{k-1} q_{k}}\\
		&\geq\inf \Delta^{P^-}_{q_1 \ldots q_{k-1}}+\frac{r_0 r_1\cdots r_{k-1}}{(q_1-1)q_1\cdots (q_{k-1}-1)q_{k-1}(q'_{k}-1)}\\
		&=\sup\Delta^{P^-}_{q_1 \ldots q_{k-1}q'_k}>x',
	\end{align*}
	where $r_0=\varphi_0$ and $r_n=\varphi_n(q_1,\ldots,q_n)$ for all $n=1,\ldots,k-1$.
\end{proof}

It follows from Lemma \ref{lemmaalternatingPerron1} and Theorem \ref{mainalternatingtheorem} that there exists a bijection from $(0,1)\setminus IS^{P^-}$ to the set of all sequences $(q_n)^{\infty}_{n=0}$ such that $q_n\in\mathbb{N}$ and $q_n\geq\varphi_{n-1}(q_1,\ldots,q_{n-1})+1$ for all $n\in\mathbb{N}\cup\{0\}$.

It directly follows from the proof of Theorem \ref{lemmacylalternatingPerron} that the following theorem holds.

\begin{theorem}
Let the $P^-$-representation be generated by the sequence $P$ of functions $\varphi_n$. For each $x\in(0,1)\setminus IS^{P^-}$, $P^-$-digits of $x$ are calculated using the following recursive formulas:
	\begin{align*}
		&	r_0=\varphi_0,~~~q_1(x)=\left\lfloor\frac{r_0}{x}\right\rfloor+1,\\
		&	x_k=\left|x-\sum_{n=0}^{k-1}\frac{(-1)^n\cdot r_0\cdots r_n}{(q_1-1)q_1\cdots (q_n-1)q_n (q_{n+1}-1)}\right|,\\
		&	r_k=\varphi_k(q_1,\ldots,q_k),\\
		&	q_{k+1}(x)=\left\lfloor\frac{r_0 \cdots r_k }{(q_1-1)q_1\cdots (q_k-1)q_k x_k}\right\rfloor+1,
	\end{align*}
where $\lfloor x\rfloor$ denotes the integer part of $x$.
\end{theorem}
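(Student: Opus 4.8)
The plan is to extract the recursion directly from the proof of Lemma~\ref{lemmacylalternatingPerron}, where the digits $q_1,q_2,\ldots$ were constructed one at a time. Almost all of the content of the statement is already implicit in that construction; what remains is to recognize the partial sum appearing in the formula for $x_k$ and to rewrite the inequalities that define the digits in floor-function form.

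First I would observe that, by Lemma~\ref{sup1alternatingPerron} (equation~\eqref{sup1alternatingPerron2}) and Lemma~\ref{infsup2alternatingPerron} (equation~\eqref{inf2alternatingPerron2}), the partial sum
$$S_{k-1}=\sum_{n=0}^{k-1}\frac{(-1)^n r_0\cdots r_n}{(q_1-1)q_1\cdots(q_n-1)q_n(q_{n+1}-1)}$$
equals $\sup\Delta^{P^-}_{q_1\ldots q_k}$ when $k$ is odd and $\inf\Delta^{P^-}_{q_1\ldots q_k}$ when $k$ is even. Since $\inf\Delta^{P^-}_{q_1\ldots q_k}<x<\sup\Delta^{P^-}_{q_1\ldots q_k}$ by Lemma~\ref{lemmacylalternatingPerron}, the absolute value in the definition of $x_k$ resolves to $\sup\Delta^{P^-}_{q_1\ldots q_k}-x$ for odd $k$ and to $x-\inf\Delta^{P^-}_{q_1\ldots q_k}$ for even $k$; this is exactly the quantity denoted $x_k$ throughout the proof of Lemma~\ref{lemmacylalternatingPerron}, so the two definitions agree.

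Next I would recall the inequalities~\eqref{1} and~\eqref{6} (together with~\eqref{4} for the first step) that uniquely pin down each digit in that proof. For $k=1$, inequality~\eqref{1} reads $q_1-1<r_0/x<q_1$, and the induction step gives, by~\eqref{6},
$$q_{k+1}-1<\frac{r_0\cdots r_k}{(q_1-1)q_1\cdots(q_k-1)q_k\,x_k}<q_{k+1}.$$
In each case the bounded real number lies strictly between two consecutive integers, so its integer part equals the lower one, and adding $1$ returns the digit. Because $x\in(0,1)\setminus IS^{P^-}$, these inequalities are strict (as noted after each of~\eqref{1},~\eqref{4}, and~\eqref{6}), so the argument of every floor is never itself an integer and the formulas $q_1=\lfloor r_0/x\rfloor+1$ and $q_{k+1}=\lfloor\,\cdot\,\rfloor+1$ hold exactly.

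There is essentially no further obstacle, since the hard work---existence and uniqueness of the digit satisfying each inequality, and the fact that a digit once fixed does not change at higher ranks---was already carried out in Lemma~\ref{lemmacylalternatingPerron} and in the existence part of Theorem~\ref{mainalternatingtheorem}. The only point requiring a moment's care is the identification of $S_{k-1}$ with the correct endpoint of the cylinder and the ensuing resolution of the absolute value; once that is settled, the recursive formulas are a direct transcription of the defining inequalities into floor-function form.
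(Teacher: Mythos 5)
Your proposal is correct and matches the paper's approach exactly: the paper gives no separate proof, stating only that the theorem follows directly from the proof of Lemma~\ref{lemmacylalternatingPerron}, and your writeup supplies precisely that derivation---identifying the partial sums with $\sup\Delta^{P^-}_{q_1\ldots q_k}$ (odd $k$) and $\inf\Delta^{P^-}_{q_1\ldots q_k}$ (even $k$), resolving the absolute value accordingly, and converting the strict inequalities \eqref{1}, \eqref{4}, \eqref{6} into floor-function form.
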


Relative positions of $P^-$-cylinders are illustrated by Figures \ref{fig:3}, \ref{fig:4}, and \ref{fig:5}, which  give an idea of the geometry of the $P^-$-representation.

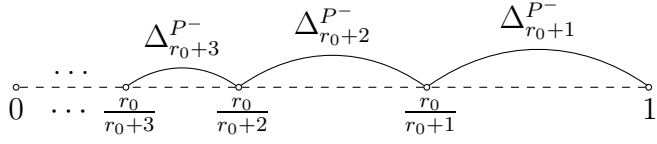
\begin{figure}[h]
	\begin{center}
		\begin{tikzpicture}[out=145,in=35]
			\draw[dashed, {Circle[open,scale=.7]}-] (8.5,0) -- (5.55,0)
			node[pos=0.01,below] {1} node[pos=1,below] {$\frac{r_0}{r_0+1}$};
			\draw[shorten <=1.2pt,shorten >=1.2pt] (8.47,0) to node[above] {$\Delta^{P^-}_{r_0+1}$} (5.5,0);
			\draw[dashed, {Circle[open,scale=.7]}-] (5.55,0) -- (3.05,0)
			node[pos=1,below] {$\frac{r_0}{r_0+2}$};
			\draw[shorten <=1.2pt,shorten >=1.2pt] (5.5,0) to node[above] {$\Delta^{P^-}_{r_0+2}$} (3,0);
			\draw[dashed, {Circle[open,scale=.7]}-] (3.05,0) -- (1.55,0)
			node[pos=1,below] {$\frac{r_0}{r_0+3}$};
			\draw[shorten <=1.2pt,shorten >=1.2pt] (3,0) to node[above] {$\Delta^{P^-}_{r_0+3}$} (1.5,0);
			\draw[dashed, {Circle[open,scale=.7]}-{Circle[open,scale=.7]}] (1.55,0) -- node[above=1] {$\ldots$} node[below=2.5] {$\cdots$} (0,0)
			node[pos=0.97,below] {0};
		\end{tikzpicture}
	\end{center}
	\caption{\small $P^-$-cylinders of rank $1$ inside $(0,1]$, $r_0=\varphi_0$.}
	\label{fig:3}
\end{figure}

\begin{figure}[h]
	\begin{center}
		\begin{tikzpicture}[out=145,in=35]
			\draw[dashed, {Circle[open,scale=.7]}-] (8.5,0) -- (5.55,0)
			node[pos=0,below] {$\sup\Delta^{P^-}_{c_1\ldots c_k}$};
			\draw[shorten <=1.2pt,shorten >=1.2pt] (8.47,0) to node[above] {$\Delta^{P^-}_{c_1\ldots c_k [r_k+1]}$} (5.5,0);
			\draw[dashed, {Circle[open,scale=.7]}-] (5.55,0) -- (3.05,0);
			\draw[shorten <=1.2pt,shorten >=1.2pt] (5.5,0) to node[above] {$\Delta^{P^-}_{c_1\ldots c_k [r_k+2]}$} (3,0);
			\draw[dashed, {Circle[open,scale=.7]}-] (3.05,0) -- (1.55,0);
			\draw[shorten <=1.2pt,shorten >=1.2pt] (3,0) to node[above] {$\Delta^{P^-}_{c_1\ldots c_k [r_k+3]}$} (1.5,0);
			\draw[dashed, {Circle[open,scale=.7]}-{Circle[open,scale=.7]}] (1.55,0) -- node[above=1] {$\ldots$} (0,0)
			node[pos=1,below] {$\inf\Delta^{P^-}_{c_1\ldots c_k}$};
			\draw[shorten <=2.5pt,shorten >=2.5pt][out=-10,in=-170] (0,0) to node[below] {$\Delta^{P^-}_{c_1\ldots c_k}$} (8.5,0);
		\end{tikzpicture}
	\end{center}
	\caption{\small $P^-$-cylinders of \textbf{odd} rank $k+1$ inside the $P^-$-cylinder $\Delta^{P^-}_{c_1\ldots c_k}$ \\ of \textbf{even} rank $k$, $r_k=\varphi_k(c_1,\ldots,c_k)$.}
	\label{fig:4}
\end{figure}

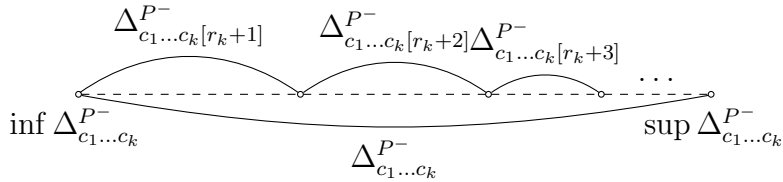
\begin{figure}[h]
	\begin{center}
		\begin{tikzpicture}[out=35,in=145]
			\draw[dashed, {Circle[open,scale=.7]}-] (0,0) -- (2.95,0)
			node[pos=0,below] {$\inf\Delta^{P^-}_{c_1\ldots c_k}$};
			\draw[shorten <=1.2pt,shorten >=1.2pt] (0.03,0) to node[above] {$\Delta^{P^-}_{c_1\ldots c_k [r_k+1]}$} (3,0);
			\draw[dashed, {Circle[open,scale=.7]}-] (2.95,0) -- (5.45,0);
			\draw[shorten <=1.2pt,shorten >=1.2pt] (3,0) to node[above] {$\Delta^{P^-}_{c_1\ldots c_k [r_k+2]}$} (5.5,0);
			\draw[dashed, {Circle[open,scale=.7]}-] (5.45,0) -- (6.95,0);
			\draw[shorten <=1.2pt,shorten >=1.2pt] (5.5,0) to node[above] {$\Delta^{P^-}_{c_1\ldots c_k [r_k+3]}$} (7,0);
			\draw[dashed, {Circle[open,scale=.7]}-{Circle[open,scale=.7]}] (6.95,0) -- node[above=1] {$\ldots$} (8.5,0)
			node[pos=1,below] {$\sup\Delta^{P^-}_{c_1\ldots c_k}$};
			\draw[shorten <=2.5pt,shorten >=2.5pt][out=-10,in=-170] (0,0) to node[below] {$\Delta^{P^-}_{c_1\ldots c_k}$} (8.5,0);
		\end{tikzpicture}
	\end{center}
	\caption{\small $P^-$-cylinders of \textbf{even} rank $k+1$ inside the $P^-$-cylinder $\Delta^{P^-}_{c_1\ldots c_k}$ \\ of \textbf{odd} rank $k$, $r_k=\varphi_k(c_1,\ldots,c_k)$.}
	\label{fig:5}
\end{figure}

\section{Faithfulness of families of $P$-cylinders and $P^-$-cylinders for calculating the Lebesgue measure of a numerical set}

Let $\Phi$ be a family of intervals that are subsets of interval $\langle a,b\rangle\subseteq\mathbb{R}$ (open, close, or half-open). If there exist countable covers of sets $E\subset\langle a,b\rangle$ and $\langle a,b\rangle\setminus E$ by intervals from $\Phi$:
\begin{gather*}
	\bigcup_{n} U_n\supset E,~~~\bigcup_{n} V_n\supset \langle a,b\rangle\setminus E,~~~U_n\in\Phi,~~~V_n\in\Phi,
\end{gather*}
then we define the following quantities for $E$:
\begin{gather*}
	\lambda^*(E,\Phi)=\inf\left\{\sum_n \left|U_n\right|\right\},~~~\lambda_*(E,\Phi)=(b-a)-\inf\left\{\sum_n \left|V_n\right|\right\}.
\end{gather*}

We say that the set $E$ is \emph{$\Phi$-measurable} if $\lambda^*(E,\Phi)=\lambda_*(E,\Phi)$. In this case, we denote $\lambda(E,\Phi)=\lambda^*(E,\Phi)=\lambda_*(E,\Phi)$.

\begin{definition}
The family $\Phi$ of subintervals of the interval $\langle a,b\rangle$ is called a \emph{faithful} family for calculating the Lebesgue measure $\lambda$ of subsets of $\langle a,b\rangle$ if every set $E\subset\langle a,b\rangle$ is $\Phi$-measurable if and only if $E$ is Lebesgue-measurable, and $\lambda(E,\Phi)=\lambda(E).$
\end{definition}

If $\Phi$ is the family of all open intervals, then $\Phi$-measurability is equivalent to Lebesgue measurability, and $\lambda(\cdot,\Phi)\equiv\lambda(\cdot)$.

\begin{lemma}\label{lemmadovirchistlebesgue}
For each open interval $(x_1,x_2)\subset(0,1]$, each $P$-representa\-tion, and each $\varepsilon>0$, there exists a countable cover $\bigcup\Delta^P_{p_1\ldots p_n}$ of $(x_1,x_2)$ by $P$-cylinders such that
	$$\sum \left|\Delta^P_{p_1\ldots p_n}\right|<x_2-x_1+\varepsilon.$$
\end{lemma}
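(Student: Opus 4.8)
The plan is to exploit two facts about positive $P$-cylinders that are the exact analogues of properties already established in the excerpt for $P^-$-cylinders: first, that the $P$-cylinders of a fixed rank $n$ partition $(0,1]$, which is precisely the existence-and-uniqueness of the $P$-representation, \cite[Theorem~1]{Moroz2024}; and second, that their lengths are \emph{uniformly} small, namely $|\Delta^P_{p_1\ldots p_n}|\le 2^{-n}$ for every rank-$n$ cylinder. The second fact is the positive counterpart of Lemma~\ref{limcylalternatingPerron}, proved by the identical elementary chain $\frac{r_0\cdots r_{n-1}}{(p_1-1)p_1\cdots(p_n-1)p_n}\le\frac{1}{(r_0+1)\cdots(r_{n-1}+1)}\le 2^{-n}$. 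The decisive feature is that this bound does not depend on the base $p_1\ldots p_n$.

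First I would fix $n$ so large that $2\cdot 2^{-n}<\varepsilon$ and let $\mathcal{C}$ denote the (at most countable) collection of all $P$-cylinders of rank $n$ that meet $(x_1,x_2)$. Since the rank-$n$ cylinders partition $(0,1]\supset(x_1,x_2)$, every point of $(x_1,x_2)$ lies in a member of $\mathcal{C}$, so $\bigcup\mathcal{C}\supset(x_1,x_2)$; this is already the required countable cover, and it remains only to control its total length.

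Next I would estimate $\sum_{C\in\mathcal{C}}|C|$ by splitting $\mathcal{C}$ into the family $\mathcal{C}_{\mathrm{in}}$ of cylinders contained in $(x_1,x_2)$ and the family $\mathcal{C}_{\mathrm{out}}$ of cylinders that meet $(x_1,x_2)$ without being contained in it. The members of $\mathcal{C}_{\mathrm{in}}$ are pairwise disjoint subsets of $(x_1,x_2)$, so $\sum_{C\in\mathcal{C}_{\mathrm{in}}}|C|\le x_2-x_1$. A cylinder in $\mathcal{C}_{\mathrm{out}}$ must contain a point outside $(x_1,x_2)$ while still meeting it, hence it lies across the endpoint $x_1$ or across $x_2$; since distinct rank-$n$ cylinders are disjoint, at most one cylinder can cross $x_1$ and at most one can cross $x_2$. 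There are therefore at most two such cylinders, each of length at most $2^{-n}$, so $\sum_{C\in\mathcal{C}_{\mathrm{out}}}|C|\le 2\cdot 2^{-n}$. Adding the two bounds yields $\sum_{C\in\mathcal{C}}|C|\le (x_2-x_1)+2\cdot 2^{-n}<x_2-x_1+\varepsilon$, as claimed.

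I expect the only real obstacle to be the bookkeeping near the endpoints rather than any genuine difficulty. At each rank the cylinders accumulate at the end(s) of their parent cylinders, so infinitely many rank-$n$ cylinders may cluster near $x_1$ or $x_2$; the point to argue carefully is that every such small cylinder lying strictly inside $(x_1,x_2)$ is absorbed harmlessly into $\mathcal{C}_{\mathrm{in}}$ (its length being already accounted for by the disjointness bound $x_2-x_1$), so that only the at most two cylinders genuinely straddling the endpoints contribute the excess, which the uniform estimate $2^{-n}$ then makes arbitrarily small.
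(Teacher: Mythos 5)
Your proof is correct, but it takes a genuinely different route from the paper's. The paper argues pointwise at the two endpoints: it takes the $P$-representations of $x_1$ and $x_2$ (treating $x_1=0$ as a separate case, since $0$ has no $P$-representation), uses {\cite[Lemma 4]{Moroz2024}} to pick deep cylinders around $x_1$ and $x_2$ of length $<\varepsilon/2$ each, and then decomposes the half-open interval lying between these two cylinders \emph{exactly} into a countable disjoint union of $P$-cylinders, so that the total length of the cover is the span of the two endpoint cylinders. You instead cover $(x_1,x_2)$ by \emph{all} cylinders of one fixed rank $n$ that meet it, which requires only the partition property (Theorem 1 of \cite{Moroz2024}) and the uniform bound $|\Delta^P_{p_1\ldots p_n}|\le 2^{-n}$ --- the positive analogue of Lemma \ref{limcylalternatingPerron}, valid since each $r_i+1\ge 2$. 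Your version is more elementary and more uniform: it needs no case split at $x_1=0$, no exact tiling of intermediate intervals by cylinders, and no analysis of how the representations of $x_1$ and $x_2$ first differ; the price is that it leans on the uniform smallness of fixed-rank cylinders, a special feature of Perron-type expansions, whereas the paper's argument only needs cylinder lengths to vanish along each individual digit sequence and so would survive in settings (e.g.\ continued-fraction-like expansions) where no rank-uniform bound exists. One point you should make explicit: the step ``a cylinder meeting $(x_1,x_2)$ but not contained in it must contain $x_1$ or $x_2$, hence there are at most two such'' uses that positive $P$-cylinders are \emph{connected} --- in fact half-open intervals $(a,b]$, which follows from Theorem 1 of \cite{Moroz2024} applied to the shifted sequence of functions, so that the set of admissible tails fills $(0,1]$. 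This is true, and it is implicitly what the paper itself exploits when it tiles half-open intervals by cylinders, but it genuinely matters: the corresponding alternating cylinders are sets of the form $(a,b)\setminus IS^{P^-}$, not intervals, so for the $P^-$-analogue (Theorem \ref{theoremdovirchistlebesguealt}) your crossing argument would need a small additional remark.
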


\begin{proof}
\emph{1. Let $x_1>0$, $x_1=\Delta^P_{c_1 \ldots c_{k-1} a_k a_{k+1}\ldots}$, $x_2=\Delta^P_{c_1 \ldots c_{k-1} b_k b_{k+1}\ldots}$, and moreover $a_k>b_k$.} Let us fix some $\varepsilon>0$. Then, according to {\cite[Lemma 4]{Moroz2024}}, there exists a number $t>k$ such that
	$$\left|\Delta^P_{c_1 \ldots c_{k-1} a_k \ldots a_t}\right|<\frac{\varepsilon}{2},~~~\left|\Delta^P_{c_1 \ldots c_{k-1} b_k \ldots b_t}\right|<\frac{\varepsilon}{2}.$$
Therefore,
	\begin{align*}
		\inf\Delta^P_{c_1 \ldots c_{k-1} a_k \ldots a_t}<x_1\leq&\sup\Delta^P_{c_1 \ldots c_{k-1} a_k \ldots a_t}\\
		\leq&\inf\Delta^P_{c_1 \ldots c_{k-1} b_k \ldots b_t}<x_2\leq\sup\Delta^P_{c_1 \ldots c_{k-1} b_k \ldots b_t}.
	\end{align*} 
We have the cover of $(x_1,x_2)$ by three sets:
	\begin{gather*}\label{7}
		\Delta^P_{c_1 \ldots c_{k-1} a_k \ldots a_t},~\Delta^P_{c_1 \ldots c_{k-1} b_k \ldots b_t},~\left(\sup\Delta^P_{c_1 \ldots c_{k-1} a_k \ldots a_t},\inf\Delta^P_{c_1 \ldots c_{k-1} b_k \ldots b_t}\right].
	\end{gather*}
The half-open interval $\left(\sup\Delta^P_{c_1 \ldots c_{k-1} a_k \ldots a_t},\inf\Delta^P_{c_1 \ldots c_{k-1} b_k \ldots b_t}\right]$ can be represented as a countable union of pairwise disjoint $P$-cylinders. Together with $P$-cylinders $\Delta^P_{c_1 \ldots c_{k-1} a_k \ldots a_t}$ and $\Delta^P_{c_1 \ldots c_{k-1} b_k \ldots b_t}$, this union covers $(x_1,x_2)$ by $P$-cylinders, whose total length is $\sup\Delta^P_{c_1 \ldots c_{k-1} b_k \ldots b_t}-\inf\Delta^P_{c_1 \ldots c_{k-1} a_k \ldots a_t}$; here 
	\begin{align*}
		&\sup\Delta^P_{c_1 \ldots c_{k-1} b_k \ldots b_t}-\inf\Delta^P_{c_1 \ldots c_{k-1} a_k \ldots a_t}\\
		=~&x_2-x_1+\left(x_1-\inf\Delta^P_{c_1 \ldots c_{k-1} a_k \ldots a_t}\right)+\left(\sup\Delta^P_{c_1 \ldots c_{k-1} b_k \ldots b_t}-x_2\right)\\
		\leq~&x_2-x_1+\left|\Delta^P_{c_1 \ldots c_{k-1} a_k \ldots a_t}\right|+\left|\Delta^P_{c_1 \ldots c_{k-1} b_k \ldots b_t}\right|<x_2-x_1+\varepsilon.
	\end{align*}
	
\emph{2. Let $x_1=0$, $x_2=\Delta^P_{c_1 c_2 \ldots}$.}
Let us fix some $\varepsilon>0$. Then, according to {\cite[Lemma 4]{Moroz2024}}, there exists a number $t$ such that $\left|\Delta^P_{c_1 \ldots c_{t}}\right|<\varepsilon.$
Of course,
	$$0=x_1<\sup\Delta^P_{c_1 \ldots c_{t-1}[c_{t}+1]}=\inf\Delta^P_{c_1 \ldots c_{t}}<x_2\leq\sup\Delta^P_{c_1 \ldots c_{t}}.$$ 
We have the cover of $(x_1,x_2)$ by two sets:
	$$\Delta^P_{c_1 \ldots c_{t}},~\left(0,\sup\Delta^P_{c_1 \ldots c_{t-1}[c_{t}+1]}\right].$$
Similarly to case 1, this cover of $(x_1,x_2)$ can be represented as a countable union of pairwise disjoint $P$-cylinders. The total length of $P$-cylinders in this union is $\sup\Delta^P_{c_1 \ldots c_{t}}$, satisfying the inequality
	$$\sup\Delta^P_{c_1 \ldots c_{t}}=x_2+\left(\sup\Delta^P_{c_1 \ldots c_{t}}-x_2\right)\leq x_2+\left|\Delta^P_{c_1 \ldots c_{t}}\right|<x_2-x_1+\varepsilon.$$
\end{proof}

Let $\mathfrak{P_0}$ be the family of all $P$-cylinders of some $P$-representation.

\begin{theorem}\label{theoremdovirchistlebesgue}
For each $P$-representation, the family $\mathfrak{P_0}$ of $P$-cylinders is faithful family for calculating the Lebesgue measure $\lambda$ of subsets of $(0,1]$.
\end{theorem}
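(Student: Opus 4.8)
The plan is to reduce faithfulness to a single identity: that the outer measure $\lambda^*(\cdot,\mathfrak{P_0})$ generated by $P$-cylinders agrees with the ordinary Lebesgue outer measure $\lambda^*$ on \emph{every} subset of $(0,1]$. Once this is in hand for arbitrary sets, the matching of inner measures and the characterization of $\mathfrak{P_0}$-measurability will follow by routine measure-theoretic bookkeeping. The easy half of the identity I would dispatch first: for any $E\subseteq(0,1]$ and any $P$-cylinder cover $\bigcup_n U_n\supseteq E$ with $U_n\in\mathfrak{P_0}$, each $U_n$ sits inside an interval of length $|U_n|$, so $\lambda^*(U_n)\le|U_n|$, and countable subadditivity of $\lambda^*$ gives $\lambda^*(E)\le\sum_n\lambda^*(U_n)\le\sum_n|U_n|$. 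Taking the infimum over all $\mathfrak{P_0}$-covers yields $\lambda^*(E)\le\lambda^*(E,\mathfrak{P_0})$.

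The substantive half is where Lemma \ref{lemmadovirchistlebesgue} does all the work. Fixing $\varepsilon>0$, I would choose open intervals $I_n\subseteq(0,1]$ with $\bigcup_n I_n\supseteq E$ and $\sum_n|I_n|<\lambda^*(E)+\varepsilon/2$, then apply Lemma \ref{lemmadovirchistlebesgue} to each $I_n$ with tolerance $\varepsilon/2^{n+1}$, obtaining a countable $P$-cylinder cover of $I_n$ of total length $<|I_n|+\varepsilon/2^{n+1}$. Pooling all these cylinders gives a $\mathfrak{P_0}$-cover of $E$ of total length $<\sum_n|I_n|+\varepsilon/2<\lambda^*(E)+\varepsilon$, whence $\lambda^*(E,\mathfrak{P_0})\le\lambda^*(E)+\varepsilon$ for every $\varepsilon>0$, so $\lambda^*(E,\mathfrak{P_0})\le\lambda^*(E)$. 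Combined with the previous paragraph this gives $\lambda^*(E,\mathfrak{P_0})=\lambda^*(E)$ for all $E\subseteq(0,1]$.

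For the inner measure I would simply apply this identity to the complement. Since the total length of $(0,1]$ is $1$, the definition reads $\lambda_*(E,\mathfrak{P_0})=1-\lambda^*((0,1]\setminus E,\mathfrak{P_0})$, and substituting the identity for $(0,1]\setminus E$ gives $\lambda_*(E,\mathfrak{P_0})=1-\lambda^*((0,1]\setminus E)=\lambda_*(E)$, the last equality being the standard relation between Lebesgue inner and outer measure on a set of finite total measure. Therefore $E$ is $\mathfrak{P_0}$-measurable iff $\lambda^*(E,\mathfrak{P_0})=\lambda_*(E,\mathfrak{P_0})$ iff $\lambda^*(E)=\lambda_*(E)$ iff $E$ is Lebesgue-measurable, and when this holds $\lambda(E,\mathfrak{P_0})=\lambda^*(E,\mathfrak{P_0})=\lambda^*(E)=\lambda(E)$; this is precisely the faithfulness of $\mathfrak{P_0}$.

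The main obstacle is the second paragraph, and it is exactly the content Lemma \ref{lemmadovirchistlebesgue} was built to supply: the ability to trade an efficient cover by open intervals for an almost-equally-efficient cover by $P$-cylinders. Everything else is bookkeeping. The one technical point I would treat with a little care is the endpoint behaviour at $0$ and $1$, namely ensuring the covering open intervals may be taken inside $(0,1]$ so that Lemma \ref{lemmadovirchistlebesgue} applies verbatim; this is harmless, since shrinking the intervals to lie in $(0,1]$ changes the total length by at most an arbitrarily small amount and a single endpoint contributes nothing to $\lambda^*$.
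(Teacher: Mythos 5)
Your proposal is correct and follows essentially the same route as the paper: both reduce faithfulness to the identity $\lambda^*(E,\mathfrak{P_0})=\lambda^*(E)$ by applying Lemma \ref{lemmadovirchistlebesgue} to each interval of an efficient open cover with tolerances $\varepsilon/2^{n}$, and then handle the inner measure by passing to the complement. Your version is slightly more explicit than the paper's on two points the paper leaves implicit --- the easy inequality $\lambda^*(E)\le\lambda^*(E,\mathfrak{P_0})$ and the restriction of covering intervals to $(0,1]$ --- but these are refinements of the same argument, not a different approach.
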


\begin{proof}
Let $E\subset(0,1]$ and let $\displaystyle\bigcup_n A_n$ be a countable cover of $E$ by open subintervals of $(0,1]$. According to Lemma \ref{lemmadovirchistlebesgue}, for all $\varepsilon>0$, each open interval $A_n$ can be covered by a countable set of $P$-cylinders, whose total length is at most $\left|A_n\right|+\frac{\varepsilon}{2^n}$. Thus, we obtain a countable cover of $E$ by $P$-cylinders $\displaystyle\bigcup\Delta^P_{c_1\ldots c_k}$, whose total length is at most $\displaystyle\sum_{n}\left|A_n\right|+\varepsilon$. Therefore,
	$$\inf\left\{\sum\left|\Delta^P_{c_1\ldots c_k}\right|\right\}=\inf\left\{\sum_{n}|A_n|\right\},$$
where infima are over all possible countable covers of $E$ by $P$-cylinders and open subintervals of $(0,1]$, respectively. Hence $\lambda^*(E,\mathfrak{P_0})=\lambda^*(E)$, where $\lambda^*$ is the Lebesgue outer measure.
	
Similarly, $\lambda_*(E,\mathfrak{P_0})=\lambda_*(E)$, where $\lambda_*$ is the Lebesgue inner measure.
	
Thus, $\lambda^*(E,\mathfrak{P_0})=\lambda_*(E,\mathfrak{P_0})$ if and only if $\lambda^*(E)=\lambda_*(E)$. Therefore, for each $P$-representation, the family $\mathfrak{P_0}$ of $P$-cylinders is faithful family for calculating the Lebesgue measure of subsets of $(0,1]$.
\end{proof}

The following theorem is analogous to Theorem \ref{theoremdovirchistlebesgue}. Therefore, we state it here without proof.

\begin{theorem}\label{theoremdovirchistlebesguealt}
For each $P^-$-representation, the family $\mathfrak{P^-_0}$ of all $P^-$-cylin\-ders is faithful family for calculating the Lebesgue measure of subsets of $(0,1)\setminus IS^{P^-}$.
\end{theorem}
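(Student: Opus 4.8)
The plan is to establish the $P^-$-analogue of Lemma \ref{lemmadovirchistlebesgue} and then repeat the argument of Theorem \ref{theoremdovirchistlebesgue} essentially verbatim. Concretely, I would first prove: for each open interval $(x_1,x_2)\subset(0,1)$, each $P^-$-representation, and each $\varepsilon>0$, there is a countable cover of $(x_1,x_2)$ by $P^-$-cylinders of total length $<x_2-x_1+\varepsilon$. The proof mirrors the two-case structure of Lemma \ref{lemmadovirchistlebesgue}. After perturbing the endpoints slightly so that $x_1,x_2\notin IS^{P^-}$ (legitimate since $IS^{P^-}$ is countable), I expand both endpoints as $P^-$-representations and let $k$ be the first index at which their digits differ. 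By Lemma \ref{limcylalternatingPerron} I choose $t>k$ so large that the two rank-$t$ cylinders $\Delta^{P^-}_{c_1\ldots c_{k-1}a_k\ldots a_t}$ and $\Delta^{P^-}_{c_1\ldots c_{k-1}b_k\ldots b_t}$ containing $x_1$ and $x_2$ each have length $<\varepsilon/2$. Together with the portion of the real line lying strictly between them, these two cylinders cover $(x_1,x_2)$; by the union-decomposition corollaries that intervening portion is a countable disjoint union of $P^-$-cylinders. The whole covering family tiles one interval whose length is at most $x_2-x_1$ plus the lengths of the two end cylinders, hence $<x_2-x_1+\varepsilon$. The degenerate case $x_1=0$ is handled as case~2 of Lemma \ref{lemmadovirchistlebesgue}, with a single small cylinder and a half-open tail.

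The main obstacle --- and the sole genuine difference from the $P$-case --- is that, by Theorem \ref{comparingalternating}, the geometric left-to-right ordering of the subcylinders of a fixed cylinder \emph{reverses} with the parity of the rank (contrast Figures \ref{fig:4} and \ref{fig:5}). Consequently, whether the endpoint carrying the larger $k$th digit lies to the left or to the right, and in which direction the intervening cylinders accumulate, depends on the parity of $k$; one must therefore split into the two parities when deciding which cylinder sits at which end. The length bookkeeping, however, is identical in both parities: the covering always consists of two small end-cylinders and a countable disjoint union filling the gap between them, and the telescoping bound on the total length is the same. Thus the alternating orientation forces a case split but introduces no new estimate.

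With this lemma available, the theorem follows exactly as Theorem \ref{theoremdovirchistlebesgue}. Given any $E\subset(0,1)\setminus IS^{P^-}$ and any countable cover of $E$ by open subintervals $A_n$, I cover each $A_n$ by $P^-$-cylinders of total length $\leq|A_n|+\varepsilon/2^n$, obtaining a $P^-$-cylinder cover of $E$ of total length $\leq\sum_n|A_n|+\varepsilon$. Conversely, by Corollary \ref{corollarystructurealternatingcyl} every $P^-$-cylinder equals $(a,b)\setminus IS^{P^-}$ and has Lebesgue measure equal to its length, while $IS^{P^-}$ is countable and hence Lebesgue-null; so $P^-$-cylinder covers and open-interval covers of $E$ have the same infimal total length. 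Therefore $\lambda^*(E,\mathfrak{P^-_0})=\lambda^*(E)$, and the identical argument applied to the complement gives $\lambda_*(E,\mathfrak{P^-_0})=\lambda_*(E)$. Hence $E$ is $\mathfrak{P^-_0}$-measurable if and only if it is Lebesgue-measurable, with $\lambda(E,\mathfrak{P^-_0})=\lambda(E)$, which is the assertion.
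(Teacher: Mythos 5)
The paper gives no separate proof of this theorem, stating it as ``analogous to Theorem \ref{theoremdovirchistlebesgue}''; your proposal is exactly that intended analogous argument---the $P^-$ version of Lemma \ref{lemmadovirchistlebesgue} followed by a verbatim repetition of the covering argument of Theorem \ref{theoremdovirchistlebesgue}---and it correctly identifies the parity-dependent (Theorem \ref{comparingalternating}) ordering of subcylinders as the only genuinely new feature, as well as the role of the null set $IS^{P^-}$. One minor point of care: the perturbation of the endpoints must be outward (replace $x_1,x_2$ by nearby points $x_1'<x_1$, $x_2'>x_2$ outside $IS^{P^-}$), so that the constructed cylinders still cover all of $(x_1,x_2)\setminus IS^{P^-}$, which is all the theorem requires.
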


\section{The equivalence between the metric theories of $P$-representation and $P^-$-representation} 

Let the $P$-representation and the $P^-$-representation be generated by the sequence $P$ of functions $\varphi_n$. We consider function $F_P\colon (0,1]\rightarrow(0,1)\setminus IS^{P^-}$ such that
$$F_P(x)=F_P\left(\Delta^P_{p_1 p_2 \ldots}\right)=\Delta^{P^-}_{p_1 p_2 \ldots}$$ 
for all $x=\Delta^P_{p_1 p_2 \ldots}\in(0,1]$.

The following properties follow directly from the definition of the function $F_P$:
\begin{enumerate}[label=\upshape(\roman*), leftmargin=*, widest=ii]
	\item the function $F_P$ is a bijection between the sets $(0,1]$ and $(0,1)\setminus IS^{P^-}$;
	\item the function $F_P$ transforms each $P$-cylinder $\Delta^{P}_{c_1 \ldots c_n}$ into a $P^-$-cylinder $\Delta^{P^-}_{c_1 \ldots c_n}$ with the same diameter (Lebesgue measure).
\end{enumerate} 

\begin{theorem}\label{maintheorem}
If $E\subseteq(0,1]$ is Lebesgue-measurable, then its image $F_P(E)$ under $F_P$ is also Lebesgue-measurable, and $\lambda(F_P(E))=\lambda(E)$.
\end{theorem}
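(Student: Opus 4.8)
The plan is to transport Lebesgue measure from the $P$-side to the $P^-$-side through the two faithfulness results, Theorem \ref{theoremdovirchistlebesgue} and Theorem \ref{theoremdovirchistlebesguealt}, using that $F_P$ induces a length-preserving bijection on cylinders. Write $F:=F_P(E)\subseteq(0,1)\setminus IS^{P^-}$. First I would record the combinatorial heart of the argument: because the admissibility constraints ($q_n\geq r_{n-1}+1$ with $r_n=\varphi_n(q_1,\ldots,q_n)$) are identical for the two representations, a base $c_1\ldots c_n$ determines a $P$-cylinder if and only if it determines a $P^-$-cylinder, so by property (ii) the assignment $\Delta^P_{c_1\ldots c_n}\mapsto\Delta^{P^-}_{c_1\ldots c_n}$ is a bijection of $\mathfrak{P_0}$ onto $\mathfrak{P^-_0}$ that preserves length.

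Next I would use bijectivity of $F_P$ (property (i)) to pass from cylinders to covers. A countable family $(U_n)$ of $P$-cylinders covers $E$ if and only if the corresponding family $(F_P(U_n))$ of $P^-$-cylinders covers $F=F_P(E)$, and $\sum_n|F_P(U_n)|=\sum_n|U_n|$. Taking infima over all such covers yields $\lambda^*(E,\mathfrak{P_0})=\lambda^*(F,\mathfrak{P^-_0})$. Applying the same reasoning to the complements, using $F_P\big((0,1]\setminus E\big)=\big((0,1)\setminus IS^{P^-}\big)\setminus F$ together with the fact that the reference length $b-a$ equals $1$ on both sides (since $IS^{P^-}$ is countable, hence Lebesgue-null, the ambient set $(0,1)\setminus IS^{P^-}$ has full measure $1$), I would obtain $\lambda_*(E,\mathfrak{P_0})=\lambda_*(F,\mathfrak{P^-_0})$.

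I would then chain the two faithfulness theorems. Assuming $E$ Lebesgue-measurable, Theorem \ref{theoremdovirchistlebesgue} gives that $E$ is $\mathfrak{P_0}$-measurable with $\lambda(E,\mathfrak{P_0})=\lambda(E)$, so $\lambda^*(E,\mathfrak{P_0})=\lambda_*(E,\mathfrak{P_0})$. By the two equalities just established, $\lambda^*(F,\mathfrak{P^-_0})=\lambda_*(F,\mathfrak{P^-_0})$, so $F$ is $\mathfrak{P^-_0}$-measurable. Then Theorem \ref{theoremdovirchistlebesguealt} forces $F$ to be Lebesgue-measurable with $\lambda(F)=\lambda(F,\mathfrak{P^-_0})$, and tracing the common value back through the cover bijection gives $\lambda(F)=\lambda^*(F,\mathfrak{P^-_0})=\lambda^*(E,\mathfrak{P_0})=\lambda(E)$, which is the assertion.

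I expect the main obstacle to be the bookkeeping in the passage to complements for the inner measure, rather than the outer-measure step. One must verify not only that $F_P$ carries $(0,1]\setminus E$ onto $\big((0,1)\setminus IS^{P^-}\big)\setminus F$ (immediate from bijectivity, but needing to be stated), but also — more delicately — that the subtracted reference length in the definition of $\lambda_*$ agrees on both sides; this is where the countability, hence Lebesgue-nullity, of $IS^{P^-}$ must be invoked explicitly so that the identity $\lambda_*(E,\mathfrak{P_0})=\lambda_*(F,\mathfrak{P^-_0})$ is genuine and not merely formal. Everything else is a routine transfer through the length-preserving cover bijection.
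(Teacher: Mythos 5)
Your proposal is correct and is essentially the paper's own proof: both transfer countable cylinder covers of $E$ and of its complement through the length-preserving bijection $\Delta^{P}_{c_1\ldots c_n}\mapsto\Delta^{P^-}_{c_1\ldots c_n}$, deduce $\lambda^*(F_P(E),\mathfrak{P^-_0})=\lambda_*(F_P(E),\mathfrak{P^-_0})=\lambda(E)$ (with the inner measure handled via $F_P(\overline{E})=\bigl((0,1)\setminus IS^{P^-}\bigr)\setminus F_P(E)$ and reference length $1$), and then chain Theorems \ref{theoremdovirchistlebesgue} and \ref{theoremdovirchistlebesguealt}. Your explicit remark that the nullity of the countable set $IS^{P^-}$ justifies using $1$ as the reference length on the $P^-$-side is a point the paper leaves implicit, but it is the same argument.
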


\begin{proof}
Let $E\subseteq(0,1]$ be a Lebesgue-measurable set, and let $\overline{E}=(0,1]\setminus E$. It follows from Theorem \ref{theoremdovirchistlebesgue} that 
	\begin{gather*}
		\lambda(E,\mathfrak{P_0})=\lambda^*(E,\mathfrak{P_0})=\lambda_*(E,\mathfrak{P_0})=\lambda(E),\\
		\lambda(\overline{E},\mathfrak{P_0})=\lambda^*(\overline{E},\mathfrak{P_0})=\lambda_*(\overline{E},\mathfrak{P_0})=\lambda(\overline{E}).
	\end{gather*}
The image under $F_P$ of the cover $\bigcup \Delta^{P}_{a_1\ldots a_n}$ of $E$ is the cover $\bigcup \Delta^{P^-}_{a_1\ldots a_n}$ of $F_P(E)$, and similarly, the image under $F_P$ of the cover $\bigcup\Delta^{P}_{b_1\ldots b_n}$ of $\overline{E}$ is the cover $\bigcup \Delta^{P^-}_{b_1\ldots b_n}$ of $F_P(\overline{E})$. The corresponding covers (image and preimage) have the same total length of cylinders. Thus,
	\begin{align*}
		\lambda^*(F_P(E),\mathfrak{P^-_0})&=\inf\left\{\sum\left|\Delta^{P^-}_{a_1\ldots a_n}\right|\right\}=\inf\left\{\sum\left|\Delta^{P}_{a_1\ldots a_n}\right|\right\}\\
		&=\lambda^*(E,\mathfrak{P_0})=\lambda(E).
	\end{align*}
Similarly, $\lambda^*(F_P(\overline{E}),\mathfrak{P^-_0})=\lambda(\overline{E})$. Since $F_P(\overline{E})=\left((0,1)\setminus IS^{P^-}\right)\setminus F_P(E)$, it follows that
	\begin{align*}
		\lambda_*(F_P(E),\mathfrak{P^-_0})=1-\lambda^*(F_P(\overline{E}),\mathfrak{P^-_0})=1-\lambda(\overline{E})=\lambda(E).
	\end{align*}
We have
	\begin{gather}\label{8}
		\lambda^*(F_P(E),\mathfrak{P^-_0})=\lambda_*(F_P(E),\mathfrak{P^-_0})=\lambda(E).
	\end{gather}
It follows that $F_P(E)$ is $\mathfrak{P^-_0}$-measurable. Using \eqref{8} and Theorem \ref{theoremdovirchistlebesguealt}, we obtain that $F_P(E)$ is Lebesgue-measurable, and simultaneously $\lambda(F_P(E))=\lambda^*(F_P(E),\mathfrak{P^-_0})=\lambda(E)$.
\end{proof}

\begin{remark}
Here, we did not use the theory of measure-preserving transformations, since obtaining our results using it would not be easy.
\end{remark}

\begin{corollary}\label{maincorollary}
Let the $P$-representation and the $P^-$-representation be generated by the sequence $P$. Suppose that $E_1\subset(0,1]$, $E_2\subset(0,1)\setminus IS^{P^-}$, and at least one of these sets, $E_1$ or $E_2$, is Lebesgue-measurable. Moreover, suppose that the set of sequences of $P$-digits corresponding to numbers in $E_1$ is equal to the set of sequences of $P^-$-digits corresponding to numbers in $E_2$. Then $\lambda(E_1)=\lambda(E_2)$.
\end{corollary}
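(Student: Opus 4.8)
The plan is to observe that the hypothesis about coinciding digit sequences is nothing but the set identity $F_P(E_1)=E_2$, after which Theorem \ref{maintheorem} and its symmetric counterpart for $F_P^{-1}$ finish the proof at once. First I would translate the digit-sequence hypothesis into a set equality. By {\cite[Theorem 1]{Moroz2024}} every $x\in(0,1]$ has a unique sequence of $P$-digits, and by Theorem \ref{mainalternatingtheorem} every $y\in(0,1)\setminus IS^{P^-}$ has a unique sequence of $P^-$-digits; moreover, as noted after Theorem \ref{comparingalternating}, both digit maps take values in the \emph{same} admissible set of sequences $(q_n)$ satisfying $q_n\ge\varphi_{n-1}(q_1,\ldots,q_{n-1})+1$. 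Consequently the $P$-digit map restricts to a bijection of $E_1$ onto the set $S$ of $P$-digit sequences of its elements, and likewise the $P^-$-digit map restricts to a bijection of $E_2$ onto the set of $P^-$-digit sequences of its elements; the hypothesis asserts that these two sets of sequences are equal, so both equal one common set $S$. Since $F_P$ sends the number with $P$-digit sequence $(p_n)$ to the number with $P^-$-digit sequence $(p_n)$, the image $F_P(E_1)$ is exactly the set of $y\in(0,1)\setminus IS^{P^-}$ whose $P^-$-digit sequence lies in $S$. By uniqueness of the $P^-$-representation, together with $E_2\subseteq(0,1)\setminus IS^{P^-}$, this set coincides with $E_2$, giving $F_P(E_1)=E_2$.

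Next I would settle the equality of measures by a short case analysis on which set is assumed measurable. If $E_1$ is Lebesgue-measurable, then Theorem \ref{maintheorem} applies directly: $E_2=F_P(E_1)$ is Lebesgue-measurable and $\lambda(E_2)=\lambda(F_P(E_1))=\lambda(E_1)$. If instead $E_2$ is the measurable set, I would apply the same reasoning to the inverse bijection $F_P^{-1}\colon(0,1)\setminus IS^{P^-}\to(0,1]$, which carries each $P^-$-cylinder $\Delta^{P^-}_{c_1\ldots c_n}$ back to the $P$-cylinder $\Delta^{P}_{c_1\ldots c_n}$ of the same length. Applying this to $E_2$ would then yield that $E_1=F_P^{-1}(E_2)$ is Lebesgue-measurable with $\lambda(E_1)=\lambda(E_2)$.

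The part requiring care is this second case, since Theorem \ref{maintheorem} is stated only for $F_P$; I must justify the corresponding statement for $F_P^{-1}$, and I expect this to be the main (if minor) obstacle. I would dispatch it by noting that the proof of Theorem \ref{maintheorem} is completely symmetric in the two cylinder families: it relies only on the faithfulness of $\mathfrak{P_0}$ and of $\mathfrak{P^-_0}$ (Theorems \ref{theoremdovirchistlebesgue} and \ref{theoremdovirchistlebesguealt}) and on the fact that $F_P$ matches $P$-cylinders with $P^-$-cylinders of equal length. Interchanging the roles of $\mathfrak{P_0}$ and $\mathfrak{P^-_0}$ lets the argument run verbatim for $F_P^{-1}$, establishing that $F_P^{-1}$ sends Lebesgue-measurable sets to Lebesgue-measurable sets of the same measure. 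One last routine point is that the discrepancy between the domains $(0,1]$ and $(0,1)\setminus IS^{P^-}$ is irrelevant for Lebesgue measure, since $IS^{P^-}$ is countable and hence null, as are the two endpoints.
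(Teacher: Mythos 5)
Your proof is correct and takes essentially the same approach as the paper: the paper's entire proof is the single observation that $E_2=F_P(E_1)$, after which Theorem \ref{maintheorem} applies. Your additional care with the case where only $E_2$ is assumed measurable --- justifying the analogue of Theorem \ref{maintheorem} for $F_P^{-1}$ by noting that its proof is symmetric in the two faithful cylinder families $\mathfrak{P_0}$ and $\mathfrak{P^-_0}$ --- is valid and fills in a case the paper's one-line proof leaves implicit.
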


It follows from the fact that $E_2=F_P(E_1)$.

From Theorem \ref{maintheorem} and Corollary \ref{maincorollary}, we can conclude that calculating the Lebesgue measure of a set defined in terms of the $P^-$-representation reduces to calculating the Lebesgue measure of the corresponding set defined in terms of $P$-representation.

\section{Known analogies that can be explained by Theorem \ref{maintheorem}}

Next, we provide examples where Theorem \ref{maintheorem} and Corollary \ref{maincorollary} explain existing analogies in individual theories of representing real numbers by series which are special cases of positive and alternating Perron series. Note that series \eqref{alternatingPerronSer1} defines to the notation of the alternating L\"{u}roth series, the Ostrogradsky--Sierpi\'nski--Pierce series, and the second Ostrogradsky series (the alternating Sylvester series) that are slightly different from their traditional forms. Specifically, the $P$-digits of numbers are incremented by 1 compared to the corresponding elements in the traditional notation of these series. However, many properties, including statistical and asymptotic properties, are common to both type of notation or can be easily adapted to them. Properties of $P^-$-representations mentioned later will be adapted to the notational form \eqref{alternatingPerronSer1}.

\emph{1. Analogies between the modified Engel expansion and the Ostrogradsky--Sierpi\'nski--Pierce expansion.}

Let the $P$-representation and the $P^-$-representation be generated by the sequence $P$ of functions $\varphi_n$ such that $\varphi_0=1$, $\varphi_n(x_1,\ldots,x_n)=x_n$ for all $n\in\mathbb{N}$. Then the $P$-representation is the representation of numbers by the modified Engel series, and the $P^-$-representation is the representation of numbers by the Ostrogradsky--Sierpi\'nski--Pierce series. 

In \cite{Renyi1962-2} (see also \cite{Renyi1962-1}), R\'enyi proved the following properties:
\begin{gather*}
	\lim_{n\to\infty}\sqrt[n]{p_n(x)}=e,\\
	\limsup_{n\to\infty}\frac{\log p_n(x)-n}{\sqrt{2n\log\log n}}=1,\\
	\liminf_{n\to\infty}\frac{\log p_n(x)-n}{\sqrt{2n\log\log n}}=-1
\end{gather*}
for almost all $x\in(0,1]$.
He also proved that
\begin{gather*}
	\lim_{n\to\infty}\mathbf{P}\left(\frac{\log p_n(x)-n}{\sqrt{n}}<y\right)=\frac{1}{\sqrt{2\pi}}\int\limits_{-\infty}^{y} e^{-\frac{u^2}{2}}\mathrm{d}u,
\end{gather*}
where $p_n(x)$ is the $n$th element of the expansion of $x$ by the modified Engel series (see {\cite[Theorem 1--3]{Renyi1962-2}}). More than 20 years later, Shallit proved similar results for the elements of the Ostrogradsky--Sierpi\'nski--Pierce expansion (see {\cite[Theorem 16--18]{Shallit1986}}). However, it follows from Theorem \ref{maintheorem} and Corollary \ref{maincorollary} that the results of Shallit are corollaries of the results of R\'enyi and do not require individual proofs.

\emph{2. Analogies between the Sylvester expansion and the alternating Sylves\-ter expansion (the second Ostrogradsky expansion).}

Analogous properties for difference forms of positive and alternating Sylvester series (see {\cite[Corollary 4]{Moroz2023}} and {\cite[Theorem 3.1]{TorbinPratsyovyta2010}}) are also explained by Theorem \ref{maintheorem} and Corollary \ref{maincorollary}.

\emph{3. Analogies between the L\"{u}roth expansion and the alternating L\"{u}roth expansion.}

Let the $P$-representation and the $P^-$-representation be generated by the sequence $P$ of functions $\varphi_n$ such that $\varphi_n=1$ for all $n\in\mathbb{N}\cup\left\{0\right\}$. Then the $P$-representation is the representation of numbers by the L\"{u}roth series, and the $P^-$-representation is the representation of numbers by the alternating L\"{u}roth series.

In \cite{ZhPr2012}, Zhykharyeva and Pratsiovytyi investigated the topological, metric, and fractal properties of the set
$$C=\left\{x\colon x=\Delta^{P}_{p_1 p_2 \ldots},~p_n\in V\subset\mathbb{N},~\forall n\in\mathbb{N}\right\}.$$
In particular, it is proved that if $V\neq\mathbb{N}$, then $C$ is a null set (see {\cite[Theorem 4, item 2]{ZhPr2012}}). In \cite{PK2013}, Pratsiovytyi and Khvorostina investigated the similar set
$$C^-=\left\{x\colon x=\Delta^{P^-}_{q_1 q_2 \ldots},~q_n\in V\subset\mathbb{N},~\forall n\in\mathbb{N}\right\}.$$
Expectedly, $C^-$ has the same property (see {\cite[Corollary 1.3, item 2]{PK2013}}).

In \cite{Salat1968}, \v{S}al\'at proved the existence of the Khintchine-type constant (see \cite[Satz 2.3]{Salat1968}) and calculated the asymptotic frequencies of digits (see {\cite[Satz 2.5]{Salat1968}}) for the L\"{u}roth expansion. In \cite{KalpazidouKnopfmacher1991}, S.~Kalpazidou, A.~Knopfmacher, and J.~Knopfmacher proved analogous properties of the alternating L\"{u}roth expansion (see {\cite[Theorem 1]{KalpazidouKnopfmacher1991}}).

\emph{4. Difference between the Engel expansion and the Ostrograd\-sky--Sierpi\'n\-ski--Pierce expansion.}

In \cite{Zhu2014}, Zhu studied statistical properties of elements of Engel expansion. In \cite{Fang2015}, Fang studied Ostrogradsky--Sierpi\'nski--Pierce expansion and solved problems analogous to those in \cite{Zhu2014}. Note that Fang viewed the Ostrogradsky--Sierpi\'nski--Pierce expansion as the alternating analogue of the Engel expansion, referring to it as the alternating Engel expansion. But the results obtained in \cite{Fang2015} significantly differ from those in \cite{Zhu2014} (see {\cite[Remark 1]{Fang2015}}). However, these expansions are defined as $P$-representation and $P^-$-representation using different sequences $P$ of functions $\varphi_n$. This is the reason for the difference in results, rather than whether the expansions are positive or alternating.

In \cite{A2024}, Ahn also considered the Ostrogradsky--Sierpi\'nski--Pierce expansion as the alternating Engel expansion. In our opinion, it is more appropriate to call the Ostrogradsky--Sierpi\'nski--Pierce expansion the alternating modified Engel expansion.

\subsection*{Acknowledgements}
This work was supported by a grant from the Simons Foundation (1290607, M.M.).

\end{document}